\newtheorem{thm}{Theorem}[section]
\newtheorem{ques}{Question}[section]
\newtheorem{lem}{Lemma}[section]
\newtheorem{cor}{Corollary}[section]
\newtheorem{conj}{Conjecture}[section]
\newtheorem{claim}{Claim}[section]
\begin{document}
\title{Spectral extremal results on trees\footnote{Supported by the National Natural Science Foundation of China (Nos.\,12271162,\,12326372), and Natural Science Foundation of Shanghai (Nos. 22ZR1416300 and 23JC1401500) and The Program for Professor of Special Appointment (Eastern Scholar) at Shanghai Institutions of Higher Learning (No. TP2022031).}}
\author{ {\bf Longfei Fang$^{a,b}$},
{\bf Huiqiu Lin$^{a}$}\thanks{Corresponding author: huiqiulin@126.com(H. Lin)}, {\bf Jinlong Shu$^{c}$}, {\bf Zhiyuan Zhang$^a$}
\\
\small $^{a}$ School of Mathematics, East China University of Science and Technology, Shanghai 200237, China\\
\small $^{b}$ School of Mathematics and Finance, Chuzhou University, Chuzhou, Anhui 239012, China\\
\small $^{c}$ School of Finance Business, Shanghai Normal University, Shanghai 200234, China\\
}

\date{}
\maketitle
{\flushleft\large\bf Abstract.}
Let ${\rm spex}(n,F)$ be the  maximum spectral radius over all $F$-free graphs of order $n$,
and ${\rm SPEX}(n,F)$ be the family of $F$-free graphs of order $n$ with spectral radius equal to ${\rm spex}(n,F)$.
Given integers $n,k,p$ with $n>k>0$ and $0\leq p\leq \lfloor(n-k)/2\rfloor$,
let $S_{n,k}^{p}$ be the graph obtained from $K_k\nabla(n-k)K_1$ by embedding $p$ independent edges within its independent set, where `$\nabla$' means the join product.
For $n\geq\ell\geq 4$, let $G_{n,\ell}=S_{n,(\ell-2)/2}^{0}$ if $\ell$ is even,
  and  $G_{n,\ell}=S_{n,(\ell-3)/2}^{1}$ if $\ell$ is odd.
Cioab\u{a}, Desai and Tait [SIAM J. Discrete Math. 37 (3) (2023) 2228--2239] showed that
for $\ell\geq 6$ and sufficiently large $n$,
if $\rho(G)\geq \rho(G_{n,\ell})$, then $G$ contains all trees of order $\ell$ unless $G=G_{n,\ell}$.
They further posed a problem to study ${\rm spex}(n,F)$ for various specific trees $F$.
Fix a tree $F$ of order $\ell\geq 6$,
let $A$ and $B$ be two partite sets of $F$ with $|A|\leq |B|$, and set $q=|A|-1$.
We first show that any graph in ${\rm SPEX}(n,F)$ contains a spanning subgraph $K_{q,n-q}$ for $q\geq 1$ and  sufficiently large $n$.
Consequently, $\rho(K_{q,n-q})\leq {\rm spex}(n,F)\leq \rho(G_{n,\ell})$,  we further respectively characterize all trees $F$ with these two equalities holding.
Secondly, we characterize the spectral extremal graphs for some specific trees and provide asymptotic spectral extremal values of the remaining trees.
In particular, we characterize the spectral extremal graphs for all spiders, surprisingly, the extremal graphs are not always the spanning subgraph of $G_{n,\ell}$.

\begin{flushleft}
\textbf{Keywords:} Spectral radius; extremal graph; tree
\end{flushleft}
\textbf{AMS Classification:} 05C05; 05C35; 05C50

\section{Introduction}

Given a graph $G$, let $A(G)$ be its  adjacency matrix, and $\rho(G)$ or $\rho(A(G))$ be its spectral radius (i.e.,
the largest eigenvalue  of $A(G)$).
Given a graph family $\mathcal{F}$, a graph is said to be \emph{$\mathcal{F}$-free}
if it does not contain any copy of $F\in \mathcal{F}$.
For convenience, we write $F$-free instead of $\mathcal{F}$-free if $\mathcal{F}=\{F\}$.
In 2010, Nikiforov \cite{Nikiforov2010} proposed the following Brualdi-Soheid-Tur\'{a}n type problem:
What is the maximum spectral radius in any $F$-free graph of order $n$?
The aforementioned value is called the \emph{spectral extremal value} of $F$ and denoted by ${\rm spex}(n,F)$.
An $F$-free graph $G$ is said to be \textit{extremal} for ${\rm spex}(n,F)$,
   if $|V(G)|=n$ and $\rho(G)={\rm spex}(n,F)$.
Denote by ${\rm SPEX}(n,F)$ the family of extremal graphs for  ${\rm spex}(n,F)$.
In the past decades, the Brualdi-Soheid-Tur\'{a}n type problem has been studied by many researchers  for many specific graphs,
such as complete graphs \cite{Nikiforov2007,Wilf1986}, odd cycles \cite{Nikiforov2008}, even cycles  \cite{CDT2022+,Nikiforov2007,ZB2012,ZL2020}, paths \cite{Nikiforov2010} and wheels \cite{Cioaba2022,ZHL2021}.
For more information, we refer the reader to \cite{DKL2022,FZL2023+,LP2022,L-P2022,LNW2021,LN2021,TAIT2019,TAIT2017,WANG2023}.

Fix a tree $F$ of order $\ell\geq 4$,
let $A$ and $B$ be two partite sets of $F$ with $|A|\leq |B|$, and set $q=|A|-1$.
If $q=0$, then we can see that $F$ is a star, and the spectral extremal result is trivial.
It remains the case $q\geq 1$.
Obviously, $K_{q,n-q}$ is $F$-free.
Then it is natural to consider the following result, which will be frequently used in the following.

\begin{thm}\label{theorem1.1}
For $q\geq 1$ and sufficiently large $n$, any graph in ${\rm SPEX}(n,F)$ contains a spanning subgraph $K_{q,n-q}$.
\end{thm}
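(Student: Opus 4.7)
The plan is to combine extremality of $G$ with the bipartite structure of $F$, in three stages. First, I observe that $K_{q,n-q}$ is itself $F$-free: both partite classes of $F$ have size at least $q+1 > q$, so any embedding of $F$ into $K_{q,n-q}$ would force one partite class of $F$ into the side of size $q$, which is impossible. Extremality of $G$ then yields $\rho(G) \geq \rho(K_{q,n-q}) = \sqrt{q(n-q)} = \Omega(\sqrt{n})$. Let $\mathbf{x}$ denote the Perron eigenvector of $G$, normalized so that $\max_v x_v = x_{u^*} = 1$. The eigenvalue equation $\rho(G)\, x_v = \sum_{w \sim v} x_w$ then gives $d(v) \geq \rho(G)\, x_v$ for every $v$, and in particular $d(u^*) = \Omega(\sqrt{n})$.

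Next, I would identify a set $S \subseteq V(G)$ of exactly $q$ ``heavy'' vertices carrying most of the Perron weight. For a suitably small constant $\eta = \eta(\ell) > 0$, set $S = \{v : x_v \geq \eta\}$; then each $v \in S$ satisfies $d(v) = \Omega(\sqrt{n})$. Using $F$-freeness together with a greedy tree-embedding argument --- a graph containing $q+1$ vertices of sufficiently large degree with a rich common-neighborhood structure contains $F$ by placing the size-$(q+1)$ partite class $A$ of $F$ onto these vertices and routing $B$ through their common neighbors --- one obtains $|S| \leq q$. A quantitative refinement that iterates the eigenvalue equation and compares $\sum_{v \in S} x_v$ with $\rho(G)$ and with $\sqrt{q(n-q)}$ then forces $|S| = q$.

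Third, I would use a local switching argument together with extremality to show that every $v \in S$ has degree $n-1$, producing the desired $K_{q,n-q}$ spanning subgraph. If some $v \in S$ were not adjacent to a vertex $w$, form $G'$ from $G$ by deleting a Perron-light edge (with both endpoints of small Perron weight) and adding $vw$. A standard Rayleigh-quotient calculation then gives $\rho(G') > \rho(G)$, so if $G'$ is still $F$-free we contradict the extremality of $G$. The main obstacle of the argument is precisely this verification of $F$-freeness of $G'$: one must show that no new copy of $F$ is created by inserting the edge $vw$. This is handled by a second application of the tree-embedding lemma, noting that any putative new copy of $F$ must use the edge $vw$, and then rerouting it through the existing common-neighborhood structure of $S$ in $G$ to produce a copy of $F$ already present in $G$, a contradiction. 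The assumption that $n$ is sufficiently large relative to $\ell$ is essential here, guaranteeing that the required common-neighborhood structure exists for the rerouting.
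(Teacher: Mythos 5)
Your first two stages track the paper's argument closely: Lemma~\ref{lemma2.1.} gives the rough bound $\rho^\star\ge\sqrt{q(n-q)}$, and Lemmas~\ref{lemma2.2.}--\ref{lemma2.4.} carry out exactly the kind of ``heavy-vertex'' bootstrap you sketch, culminating in $|L^1|=q$ with each $u\in L^1$ satisfying $x_u\ge 1-\varepsilon$ and $d(u)\ge(1-2\varepsilon)n$. The genuine problem is in your third stage, the local switching.

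You propose: if $v\in S$ misses a neighbor $w$, delete one ``Perron-light'' edge $ab$ and add $vw$, then argue $\rho(G')>\rho(G)$ from the Rayleigh quotient. The change in the quotient is $\tfrac{2}{X^\mathrm{T}X}(x_vx_w-x_ax_b)$, and nothing in your setup bounds $x_w$ away from $0$. The vertex $w$ is an arbitrary non-neighbor of $v$; in a disconnected or near-bipartite extremal graph it may carry Perron weight $0$ or $o(x_ax_b)$, and then the swap does not increase $\rho$, so you get no contradiction. The paper sidesteps this entirely by switching at the \emph{light} end, not the heavy one: it picks a vertex $v_1\notin L$ that is not fully joined to $L$, deletes all edges at $v_1$, and joins $v_1$ to $L$. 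The resulting change in the quotient factors as $\tfrac{2}{X^\mathrm{T}X}\,x_{v_1}\bigl(\sum_{w\in L}x_w-\sum_{w\in N(v_1)}x_w\bigr)$. The bracket is shown to be positive using Lemmas~\ref{lemma2.4.} and~\ref{lemma2.5.} (which bound $\sum_{w\in L}x_w\ge q(1-\varepsilon)$ and $\sum_{w\in N(v_1)}x_w\le q-1+\tfrac{3}{2\ell}$), so the product is automatically non-negative, and the degenerate case $x_{v_1}=0$ is ruled out by a one-line eigenvector argument. Your version has no analogous positivity because the quantity $x_vx_w-x_ax_b$ is a difference of two unrelated small terms, not a product of a nonnegative weight with a uniformly positive bracket.

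Second, your $F$-freeness argument after the switch is not sound as sketched. Because you only add the single edge $vw$, the modified vertex $w$ has a neighborhood that is not comparable to any other vertex's, so there is no clean ``replace $w$ by a spare vertex'' move available. In the paper's switch, the relocated vertex $v_1$ has $N_{G^{\star\star}}(v_1)=L$, identical to the neighborhood (restricted to $L$) of every vertex in $R$; hence any new copy $F'$ of $F$ through $v_1$ can be rerouted through some $w_1\in R\setminus V(F')$ (which exists since $|R|\ge n/2\gg\ell$), yielding a copy of $F$ already in $G^\star$. Your proposed rerouting ``through the common-neighborhood structure of $S$'' would need a quantitative lemma of this type, and it is not clear how to supply one when the new edge $vw$ may place $w$ in the ``$B$-side'' of the embedding with edges into vertices outside $S$. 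Finally, a minor point: your stated target ``$d(v)=n-1$ for all $v\in S$'' is stronger than Theorem~\ref{theorem1.1} and is false in general (e.g.\ when $\beta(F)\le q$ the set $L$ cannot be a clique); the correct target, and what the paper proves, is that every vertex outside $L$ is adjacent to all of $L$, i.e.\ $R_1=\emptyset$.
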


Given integers $n,k,p$ with $n>k>0$ and $p\in \{0,\dots, \lfloor(n-k)/2\rfloor\}$,
let $S_{n,k}^{p}$ be the graph obtained from $K_k\nabla(n-k)K_1$ by embedding $p$ independent edges into $(n-k)K_1$, where `$\nabla$' means the join product.
For $n\geq\ell\geq 4$, set $G_{n,\ell}=S_{n,(\ell-2)/2}^{0}$ if $\ell$ is even and $G_{n,\ell}=S_{n,(\ell-3)/2}^{1}$ otherwise.
Nikiforov \cite{Nikiforov2010} posed the following conjecture,
which is a spectral version of the well-known Erd\H{o}s-S\'{o}s Conjecture that any graph of average degree larger than $\ell-2$ contains all trees of order $\ell$.

\begin{conj}\label{conj2.1}\emph{(\cite{Nikiforov2010})}
Let $\ell\geq 6$ and $G$ be a graph of sufficiently large order $n$.
If $\rho(G)\geq \rho(G_{n,\ell})$, then $G$ contains all trees of order $\ell$ unless $G=G_{n,\ell}$.
\end{conj}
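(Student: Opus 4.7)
\textbf{Baseline.} Since $F$ is a tree with bipartition $(A,B)$ where $|A|=q+1\leq|B|=\ell-q-1$ and both parts exceed $q$, any bipartite embedding of $F$ into $K_{q,n-q}$ would have to place one of $A,B$ into the size-$q$ side, which is impossible. Thus $K_{q,n-q}$ is $F$-free, and any $G\in{\rm SPEX}(n,F)$ satisfies $\rho(G)\geq\rho(K_{q,n-q})=\sqrt{q(n-q)}$. The classical greedy-embedding fact---minimum degree $\geq\ell-1$ forces containment of every tree on $\ell$ vertices---combined with iterative pruning yields the upper bound $e(G)\leq(\ell-2)n$, whence $\rho(G)\leq\sqrt{2e(G)}=O(\sqrt n)$, and with the baseline, $\rho(G)=\Theta(\sqrt n)$. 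Let $\mathbf{x}$ be the positive Perron eigenvector of $G$ normalized so $\max_v x_v=1$, and order the vertices so that $x_{u_1}\geq x_{u_2}\geq\cdots\geq x_{u_n}$.

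\textbf{Apex set.} The next step is to show that for a suitable small constant $c>0$, $x_{u_q}\geq c$ while $x_{u_{q+1}}=o(1)$. The first direction follows from a Rayleigh quotient argument: a hypothetical heavy set of size less than $q$ would cap the spectral radius strictly below $\sqrt{q(n-q)}$, contradicting the baseline. The second is extracted from $F$-freeness: if $q+1$ vertices all had $x$-value $\geq c$, each would satisfy $d(v)\geq c\rho=\Omega(\sqrt n)\gg\ell$, and a greedy embedding mapping $A$ onto these $q+1$ heavy vertices and extending $B$ into their neighborhoods would produce a copy of $F$, a contradiction. Set the apex set $L:=\{u_1,\ldots,u_q\}$.

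\textbf{Spanning $K_{q,n-q}$.} Finally, I argue by contradiction that each $u_i\in L$ is adjacent to every $w\in V(G)\setminus L$. If $u_iw\notin E(G)$ for some such pair, form a modified graph $G'$ from $G$ by adding the edge $u_iw$ and (if needed) removing a few edges of small $x$-weight in order to preserve $F$-freeness. The crucial verification is that $G'$ is $F$-free: any new copy of $F$ in $G'$ must use the edge $u_iw$, hence place $A$ either inside $L\cup\{w\}$ (blocked by $|L|=q<|A|=q+1$) or inside $V(G)\setminus L$ (blocked by the quantitative sparsity of $G[V(G)\setminus L]$ that flows from $x_v=o(1)$ on those vertices). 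A Rayleigh quotient computation using $x_{u_i}\geq c$ together with the smallness of the other relevant entries then shows $\rho(G')>\rho(G)$, contradicting extremality. Hence $L$ is completely adjacent to $V(G)\setminus L$, and $K_{q,n-q}\subseteq G$ spans.

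\textbf{Main obstacle.} The hardest step is verifying the $F$-freeness of $G'$. This requires the combinatorial mismatch $|L|=q<|A|$ together with a quantitative structural bound on $G[V(G)\setminus L]$ derived from the Perron-vector smallness there; one must rule out \emph{every} possible $F$-embedding in $G'$, not just those of a specific form. The greedy embedding used in the apex step must also be carried out carefully for a general tree $F$ with arbitrary internal structure within its bipartition.
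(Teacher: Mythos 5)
Your proposal does not prove Conjecture~\ref{conj2.1}; it proves (a rough sketch of) Theorem~\ref{theorem1.1}, a strictly weaker statement. The conjecture is a universal claim over \emph{all} trees $F$ of order $\ell$: if $\rho(G)\geq\rho(G_{n,\ell})$ and $G\neq G_{n,\ell}$, then $G$ contains every such tree. Equivalently, for every $\ell$-vertex tree $F$ one must show ${\rm spex}(n,F)\leq\rho(G_{n,\ell})$ with equality only when ${\rm SPEX}(n,F)=\{G_{n,\ell}\}$. Your argument stops at concluding $K_{q,n-q}\subseteq G^{\star}$ for a fixed $F$; it never compares $\rho(G^{\star})$ to $\rho(G_{n,\ell})$, never pins down the structure of $G^{\star}[L]$ and $G^{\star}[R]$, and never treats the equality case. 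These missing steps are substantial: they are what Lemmas~\ref{lemma3.1.}--\ref{lem3.5} and Theorems~\ref{theorem1.2}--\ref{theorem1.4} of the paper do for various families, and what Cioab\u{a}, Desai and Tait did to settle the full conjecture. Note also that the present paper does not reprove Conjecture~\ref{conj2.1}; it cites \cite{CDT2023} for it and only proves Theorem~\ref{theorem1.1} in Section~\ref{section2}.

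Even viewed as a proof of Theorem~\ref{theorem1.1}, two steps are not yet sound. First, in the apex-set argument you deduce $d(v)\geq c\rho=\Omega(\sqrt{n})$ from $x_v\geq c$; this is too weak to force $q+1$ heavy vertices to have at least $\ell$ common neighbors, so the greedy embedding of $F\subseteq K_{q+1,\ell}$ is not justified. What is actually needed, and what Lemma~\ref{lemma2.3.} delivers, is the much stronger linear bound $d(v)\geq(x_v-\varepsilon)n$, from which each vertex of $L^1$ has at least $(1-2\varepsilon)n$ neighbors and the common-neighborhood count follows. Second, in the $F$-freeness check for $G'$, the dichotomy ``$A$ lies in $L\cup\{w\}$ or $A$ lies in $V(G)\setminus L$'' is false: an embedding can split $A$ across both sides. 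The paper's rewiring avoids this entirely: delete all edges at $v_1\in R_1$ and join $v_1$ exactly to $L$, so that $N_{G^{\star\star}}(v_1)=L\subseteq N_{G^{\star\star}}(w_1)$ for any $w_1\in R\setminus V(F')$, and then any hypothetical $F$-copy through $v_1$ can be replayed in $G^{\star}$ by substituting $w_1$ for $v_1$, a clean contradiction without any case analysis on where $A$ lands.
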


The validity of Conjecture \ref{conj2.1} for $P_{\ell}$ was proved by Nikiforov \cite{Nikiforov2010},
for all brooms was proved by Liu, Broersma and Wang \cite{LHW2022},
for the family of all $\ell$-vertex trees with diameter at most 4 was proved by
Hou, Liu, Wang, Gao and Lv \cite{HLW2021} when $\ell$ is even and
Liu, Broersma and Wang \cite{LHW2023} when $\ell$ is odd.
Very recently, Cioab\u{a}, Desai and Tait \cite{CDT2023} completely solved Conjecture \ref{conj2.1}.
Thus, Conjecture \ref{conj2.1} for the family of all $\ell$-vertex trees with given diameter is true.
Now we give a slightly stronger result.

\begin{thm}\label{cor4.1}
Let $\ell\geq 6$ and $d\in \{4,\dots, \ell-1\}$, and let $G$ be a graph of sufficiently large order $n$.\\
(i) If at least one of $\ell$ and $d$ is even, then there exists a tree $F$ of order $\ell$ and diameter $d$ such that ${\rm SPEX}(n,F)=\{G_{n,\ell}\}$.\\
(ii) If both $\ell$ and $d$ are odd and $\rho(G)\geq \rho(S_{n,(\ell-3)/2}^{0})$, then $G$ contains all trees of order $\ell$ and diameter $d$ unless $G=S_{n,(\ell-3)/2}^{0}$.
\end{thm}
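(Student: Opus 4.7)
The plan is to deduce Theorem~\ref{cor4.1} from the Cioab\u{a}-Desai-Tait (CDT) resolution of Conjecture~\ref{conj2.1} through explicit tree constructions. Throughout I use the basic embedding criteria: for sufficiently large $n$, a tree $F$ is a subgraph of $S_{n,k}^{0}$ iff $\tau(F)\le k$, and a subgraph of $S_{n,k}^{1}$ iff $\tau(F)\le k$ or some edge $e\in E(F)$ satisfies $\tau(F-e)\le k$; by K\"onig, the latter alternative says that some edge of $F$ lies in every maximum matching of $F$.

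For part~(i), I would exhibit for each admissible pair $(\ell,d)$ a tree $F$ of order $\ell$ and diameter $d$ with $F\not\subseteq G_{n,\ell}$; CDT then forces ${\rm SPEX}(n,F)=\{G_{n,\ell}\}$, since any other $F$-free graph with $\rho\ge\rho(G_{n,\ell})$ would contain every $\ell$-vertex tree. When $\ell$ is even, $G_{n,\ell}=S_{n,(\ell-2)/2}^{0}$ has vertex cover $(\ell-2)/2$, so it suffices to build a tree of diameter $d$ with a perfect matching (hence $\tau=\ell/2$); I would do this from a $P_{d+1}$-spine decorated with $P_{2}$-pendants at internal vertices, inserting a single extra leaf to correct parity when both $\ell$ and $d$ are even. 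When $\ell$ is odd and $d$ is even, the required $F$ must have no edge lying in every maximum matching; the balanced spider with $2(\ell-1)/d$ legs of length $d/2$ serves when $d\mid 2(\ell-1)$, and otherwise a $P_{d+1}$-spine with $P_{2}$-pendants at interior vertices works once one verifies that removing any edge leaves the matching number unchanged.

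For part~(ii), with both $\ell$ and $d$ odd, the crux is the structural claim that every tree $F$ in the family $\mathcal{F}$ of $\ell$-vertex trees of diameter $d$ embeds in $G_{n,\ell}=S_{n,(\ell-3)/2}^{1}$; equivalently, whenever $\tau(F)=(\ell-1)/2$, some edge of $F$ lies in every maximum matching. I would prove this by inspecting a longest $v_{0}v_{1}\cdots v_{d}$ path, which has an even number $d+1$ of vertices, and using that the off-path remainder $\ell-d-1$ is odd: tracing the matching structure from the leaf $v_{d}$, I would identify a specific pendant edge near the bottom of the path (such as $v_{d-1}v_{d}$ or an edge attached close to $v_{d-1}$) that must belong to every maximum matching. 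The parity of $d$ is essential here, as $P_{\ell}$ with even diameter $\ell-1$ illustrates the failure mode, where no edge lies in every maximum matching. In parallel, the construction in part~(i) supplies an explicit $F^{*}\in\mathcal{F}$ with $\tau(F^{*})=(\ell-1)/2$, which shows $S_{n,(\ell-3)/2}^{0}$ is $F^{*}$-free and makes the exception $G=S_{n,(\ell-3)/2}^{0}$ genuinely necessary.

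Granting the structural claim, CDT applied to any $F\in\mathcal{F}$ missed by $G$ forces $G\ne G_{n,\ell}$ and $\rho(G)<\rho(G_{n,\ell})$. To sharpen this to $G=S_{n,(\ell-3)/2}^{0}$ whenever $\rho(G)\ge\rho(S_{n,(\ell-3)/2}^{0})$, I would apply Theorem~\ref{theorem1.1} to $F^{*}$ (whose bipartition has $|A|=(\ell-1)/2$, giving $q=(\ell-3)/2$) to produce a spanning $K_{(\ell-3)/2,\,n-(\ell-3)/2}$ in any candidate $G$, and then analyze $F^{*}$-free supergraphs of this skeleton whose spectral radius lies in the narrow window $[\rho(S_{n,(\ell-3)/2}^{0}),\rho(S_{n,(\ell-3)/2}^{1}))$. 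The main obstacle---and the most technical step---is this final spectral-structural analysis: one must show that any edge added on the larger side either creates a forbidden copy of $F^{*}$ or forces the removal of a small-side clique edge that drops the spectral radius below the threshold, leaving $S_{n,(\ell-3)/2}^{0}$ as the only possibility.
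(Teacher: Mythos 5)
The paper's proof is considerably shorter than your proposal and hinges on one observation you overlook: since $d$ is odd, the two endpoints of a longest path lie in different partite sets of every $F\in\mathcal F$, so the smaller part $A$ contains a leaf and $\delta=1$. Once this is in hand, Lemma~\ref{lem3.4} (which requires only $\delta=1$) immediately gives $e(G^{\star}[R])=0$ for the extremal graph of every $F\in\mathcal F$, so $G^{\star}\subseteq S_{n,q}^{0}\subseteq S_{n,(\ell-3)/2}^{0}$, yielding ${\rm spex}(n,F)\le\rho(S_{n,(\ell-3)/2}^{0})$ with equality iff $G^{\star}=S_{n,(\ell-3)/2}^{0}$; combined with an explicit spider $S(\alpha+\gamma+2,\alpha+1,d-2)$ that is not contained in $S_{n,(\ell-3)/2}^{0}$, part~(ii) falls out. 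For part~(i) the paper does not invoke CDT directly but instead checks $\beta$ and $\delta$ for the same explicit spider and applies its own Theorem~\ref{theorem1.2}.

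The gap in your part~(ii) is real: you correctly identify the structural claim (some edge of $F$ lies in every maximum matching, equivalently $\delta=1$ plus $\tau(F)=|A|$), but the hard work you postpone---the ``narrow window'' spectral-structural analysis on $F^{*}$-free supergraphs of $K_{q,n-q}$---is exactly what Lemma~\ref{lem3.4} packages, and you never invoke it or anything equivalent. Worse, you need the bound ${\rm spex}(n,F)\le\rho(S_{n,(\ell-3)/2}^{0})$ for \emph{every} $F\in\mathcal F$ (not just $F^{*}$), since the statement quantifies over all such trees, and your route through Theorem~\ref{theorem1.1} alone only produces a spanning $K_{q,n-q}$ without controlling the edges inside $R$ or inside $L$. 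As written, the proof of~(ii) is not complete.

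There are also smaller issues in your part~(i) constructions. Attaching $P_{2}$-pendants at ``internal'' spine vertices does not preserve diameter $d$ unless the attachment points are restricted to $v_{2},\dots,v_{d-2}$ (a pendant at $v_{1}$ creates a path of length $d+1$ to $v_{d}$). More seriously, for $\ell$ odd and $d$ even the decorated-spine tree does \emph{not} automatically satisfy the required property: for example, with $\ell=9$, $d=6$, attaching the $P_{2}$-pendant at the central vertex $v_{3}$ gives a spider whose pendant edge $w_{1}w_{2}$ lies in every maximum matching, so that tree \emph{does} embed in $S_{n,3}^{1}$ and fails to certify $F\not\subseteq G_{n,\ell}$, whereas attaching at $v_{2}$ works. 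So ``once one verifies'' is doing real work here and must be spelled out; the location of the pendants matters, and the choice is not arbitrary. Note that your reduction to $F\not\subseteq G_{n,\ell}$ via CDT is itself a clean and valid alternative to the paper's Theorem~\ref{theorem1.2}, provided the constructions are fixed.
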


It is interesting to find all those trees $F$ satisfying ${\rm SPEX}(n,F)=\{G_{n,\ell}\}$.

\begin{ques}\label{ques1.1}
For sufficiently large $n$,
which tree $F$ of order $\ell\geq 6$ can satisfy ${\rm SPEX}(n,F)=\{G_{n,\ell}\}$?
\end{ques}

A \emph{covering} of a graph is a set of vertices which meets all edges of the graph.
Let $\beta(G)$ denote the minimum number of vertices in a covering of $G$.
Set $\delta:=\min\{d_F(x):x\in A\}$.
Inspired by the work of Cioab\u{a}, Desai and Tait, we provide an answer to Question \ref{ques1.1}.

\begin{thm}\label{theorem1.2}
Let $n$ be sufficiently large, and $F$ be a tree of order $\ell\geq 4$.\\
(i) For even $\ell$, ${\rm SPEX}(n,F)=\{S_{n,(\ell-2)/2}^{0}\}$ if and only if $\beta(F)=\ell/2$.\\
(ii) For odd $\ell$, ${\rm SPEX}(n,F)=\{S_{n,(\ell-3)/2}^{1}\}$ if and only if $\beta(F)=(\ell-1)/2$ and $\delta\geq 2$.
\end{thm}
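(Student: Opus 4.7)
The plan is to reduce the two characterizations to a single combinatorial question: when is $G_{n,\ell}$ itself $F$-free? The Cioab\u{a}--Desai--Tait theorem \cite{CDT2023} immediately gives $\rho(G) < \rho(G_{n,\ell})$ for every $F$-free graph $G$ of order $n$ with $G \neq G_{n,\ell}$ (else $G$ would contain every tree of order $\ell$, including $F$). Hence $\mathrm{SPEX}(n,F) = \{G_{n,\ell}\}$ precisely when $G_{n,\ell}$ is $F$-free, and the problem becomes purely combinatorial.

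For (i), the graph $G_{n,\ell} = K_{(\ell-2)/2} \nabla (n - (\ell-2)/2)K_1$ has covering number $(\ell-2)/2$, while K\"onig's theorem yields $\beta(F) \leq \lfloor \ell/2 \rfloor = \ell/2$. If $\beta(F) \leq (\ell-2)/2$, a minimum vertex cover of $F$ embeds into $K_{(\ell-2)/2}$ with the remaining vertices placed arbitrarily in the large independent set, giving $F \subseteq G_{n,\ell}$; otherwise $\beta(F) = \ell/2$ rules out any such embedding. For (ii), set $k = (\ell-3)/2$, so $G_{n,\ell} = K_k \nabla (K_2 \cup (n-k-2)K_1)$ has covering number $k+1$, and the case $\beta(F) \leq k$ is handled identically. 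When $\beta(F) = k+1$ one has $|A| = k+1$ and $|B| = k+2$ (since $\beta(F) \leq |A| \leq \lfloor \ell/2 \rfloor$); the sub-case $\delta = 1$ is easy, since picking a leaf $a^* \in A$ with unique neighbor $b^* \in B$ and mapping $A \setminus \{a^*\}$ onto the clique $K_k$, $a^* \mapsto w_1$, $b^* \mapsto w_2$, and $B \setminus \{b^*\}$ into the independent vertices preserves all $F$-edges (the only $F$-edge not incident to $A \setminus \{a^*\}$ is $a^*b^*$, which lands on the extra edge $w_1 w_2$).

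The hard part --- and the main obstacle --- is to show $F \not\subseteq G_{n,\ell}$ when $\beta(F) = k+1$ and $\delta \geq 2$. Suppose for contradiction that $\phi : F \hookrightarrow G_{n,\ell}$, and let $T = \phi^{-1}(V(K_k))$. Since $w_1 w_2$ is the only edge of $G_{n,\ell}$ not incident to $V(K_k)$, $T$ covers all $F$-edges except at most one, and $|T| \leq k < \beta(F)$ forces $|T| = k$ with a unique uncovered edge $uv$ satisfying $\phi(\{u,v\}) = \{w_1, w_2\}$. The identity $\sum_{a \in A} d_F(a) = \ell - 1 = 2|A|$ combined with $\delta \geq 2$ forces $d_F(a) = 2$ for every $a \in A$, so $F$ is the subdivision of a tree $T'$ on vertex set $B$, with each $a \in A$ indexing an edge $e_a \in E(T')$. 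Setting $T_A = T \cap A$, $T_B = T \cap B$, and taking (without loss of generality) $u \in A$, $v \in B$, every $y \in A \setminus T_A \setminus \{u\}$ must have both its $F$-neighbors in $T_B$, because $uv$ is the unique edge from $A \setminus T_A$ to $B \setminus T_B$. This places $|T_B| = k - |T_A|$ distinct $T'$-edges $\{e_y\}$ inside the subforest $T'[T_B]$ on $|T_B|$ vertices, which admits at most $|T_B| - 1$ edges --- a contradiction. (The degenerate case $T_B = \emptyset$ forces $d_F(u) = 1$, again violating $\delta \geq 2$.) The delicate point is precisely this step: recognizing that $\delta \geq 2$ together with $\beta(F) = (\ell-1)/2$ rigidly makes $F$ a subdivided tree, after which the single non-clique edge $w_1 w_2$ of $G_{n,\ell}$ cannot absorb enough of $F$'s edges to permit an embedding.
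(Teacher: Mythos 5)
Your proposal takes a genuinely different route from the paper, and the core combinatorics is correct, but there is a range gap to flag.

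\emph{Comparison.} You bypass the paper's own structural machinery (its Theorem~\ref{theorem1.1} and Lemmas~\ref{lem3.3}--\ref{lem3.5}) by invoking the full Cioab\u{a}--Desai--Tait theorem as a black box, which reduces the spectral statement entirely to the combinatorial question of whether $G_{n,\ell}$ contains $F$. This is a clean and legitimate reduction, and your analysis of when $F\subseteq G_{n,\ell}$ is correct. In particular, your argument in the hard case --- noting that $\beta(F)=(\ell-1)/2$ and $\delta\geq 2$ force $\sum_{a\in A}d_F(a)=\ell-1=2|A|$, hence every $A$-vertex has degree exactly~$2$, so $F$ is a once-subdivided tree $T'$ on $B$, and then counting the edges $e_y$ that must land inside the subforest $T'[T_B]$ --- is a valid alternative to the paper's Lemma~\ref{lem3.5}. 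The paper's lemma is worth comparing: it handles the more general assertion that $S_{n,q}^1$ is $F$-free whenever $\delta\geq 2$ (for any $q=|A|-1$, not only $q=(\ell-3)/2$), via the forest bound $e(F[A_2\cup B_1])\leq |A_2|+|B_1|-1$ against $e(F[A_2\cup B_1])\geq 2|A_2|-1$; that extra generality is reused in the paper's Theorems~\ref{theorem1.3}, \ref{theorem1.4} and \ref{theorem1.5}, whereas your subdivision argument is tailored to the single equality case $\beta(F)=(\ell-1)/2$. The paper, conversely, proves Theorem~\ref{theorem1.2} self-containedly from its own lemmas (showing $G^{\star}[R]$ is $2K_2$- and $P_3$-free, hence $G^{\star}\subseteq S_{n,(\ell-3)/2}^1$, then applying Lemma~\ref{lem3.5}), and does not depend on the CDT theorem at all.

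\emph{The gap.} The CDT theorem, as stated, requires $\ell\geq 6$, but Theorem~\ref{theorem1.2} is asserted for all $\ell\geq 4$. Your reduction therefore does not cover $\ell\in\{4,5\}$; these would need a separate check (e.g., $P_4$ via Nikiforov's path result, and the two non-star trees of order~5 by hand), or you should explicitly restrict your claim to $\ell\geq 6$. The paper's self-contained route (its Theorem~\ref{theorem1.1} and Lemmas~\ref{lem3.3}--\ref{lem3.5} hold for all $q\geq 1$) covers $\ell=4,5$ without extra work.
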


In \cite{CDT2023}, Cioab\u{a}, Desai and Tait also proposed the following question.

\begin{ques}\label{ques1.2}\emph{(\cite{CDT2023})}
For sufficiently large $n$,
what is the exact value of ${\rm spex}(n,F)$ for a tree $F$ of order $\ell\geq 6$?
\end{ques}

Now we give partial answers to Question \ref{ques1.2} in Theorems \ref{theorem1.3} and \ref{theorem1.4}.

\begin{thm}\label{theorem1.3}
If $q\geq 1$ and $\delta\geq 2$, then $S_{n,q}^1$ is $F$-free.
Moreover, for sufficiently large $n$,
$$\frac{q-1}{2}+\sqrt{qn-\frac{3q^2+2q+1}{4}}<{\rm spex}(n,F)\leq \rho(J)=\sqrt{qn}+\frac{q+\delta-2}{2}+O(\frac{1}{\sqrt{n}}),$$
where $J=\begin{bmatrix}
      q-1 & n-q \\
    q & \delta-1 \\
\end{bmatrix}
$.
\end{thm}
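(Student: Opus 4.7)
The plan is to argue in three parts. First, I verify that $S_{n,q}^1$ is $F$-free, which together with $S_{n,q}^1 \supset S_{n,q}^0$ gives the lower bound. Second, I apply Theorem \ref{theorem1.1} to obtain a spanning $K_{q,n-q}$ in any extremal graph $G$. Third, I combine a degree bound on the non-universal side with a Perron-eigenvector estimate to force $\rho(G) \le \rho(J)$, whence the asymptotic follows by expansion.

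For the $F$-freeness of $S_{n,q}^1$, suppose for contradiction $\phi: F \hookrightarrow S_{n,q}^1$. Since $A$ is independent and $V(K_q)$ dominates $S_{n,q}^1$, $\phi(A)$ cannot meet $V(K_q)$ (else $|\phi(A)|=1$, contradicting $|A|=q+1\ge 2$); so $\phi(A)$ lies in the $(n-q)$-part and uses at most one endpoint of the unique extra edge $uv$. The joint neighborhood of $\phi(A)$ in $S_{n,q}^1$ is thus contained in $V(K_q) \cup \{v\}$, of size at most $q+1$. Combined with $|B|\ge|A|=q+1$, this forces $|B|=q+1$ and $\ell=2q+2$, so $F$ has exactly $2q+1$ edges, all between $A$ and $B$. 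Hence $\sum_{a\in A} d_F(a) = 2q+1 < 2(q+1)$, contradicting $\delta\ge 2$. Therefore $S_{n,q}^1$ is $F$-free, giving ${\rm spex}(n,F) \ge \rho(S_{n,q}^1) > \rho(S_{n,q}^0) = \tfrac{q-1}{2} + \sqrt{qn - (3q^2+2q-1)/4}$ (the larger root of the equitable-partition polynomial $\lambda^2 - (q-1)\lambda - q(n-q)$), which strictly exceeds $\tfrac{q-1}{2} + \sqrt{qn - (3q^2+2q+1)/4}$.

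For the upper bound, let $G \in {\rm SPEX}(n,F)$ and use Theorem \ref{theorem1.1} to write $V(G) = X \sqcup Y$ with $|X|=q$, $|Y|=n-q$, and every $X$--$Y$ pair adjacent. The key subclaim is $\Delta(G[Y]) \le \delta - 1$. If instead some $v \in Y$ has $\ge \delta$ neighbors inside $Y$, pick $a^* \in A$ with $d_F(a^*) = \delta$ and define $\phi: V(F) \to V(G)$ by setting $\phi(a^*) = v$, sending $N_F(a^*)$ to $\delta$ distinct $Y$-neighbors of $v$, $A\setminus\{a^*\}$ bijectively onto $X$, and $B \setminus N_F(a^*)$ injectively into the remaining vertices of $Y$ (feasible for $n$ large). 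Tree edges at $a^*$ are realized by construction; every other edge of $F$ runs between $A\setminus\{a^*\}\subseteq X$ and $B\subseteq Y$, hence lies in the spanning $K_{q,n-q}$. This embeds $F$ in $G$, contradicting $F$-freeness.

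With $\Delta(G[Y])\le\delta-1$ (and trivially $\Delta(G[X])\le q-1$), the standard two-block Perron bound applies. Let $\mathbf{x}$ be the positive Perron vector, $\alpha = \max_{u\in X}\mathbf{x}_u$, $\beta = \max_{u\in Y}\mathbf{x}_u$, $S_X = \sum_X\mathbf{x}_u \le q\alpha$, $S_Y = \sum_Y\mathbf{x}_u \le (n-q)\beta$. Evaluating $\rho(G)\mathbf{x} = A(G)\mathbf{x}$ at vertices achieving $\alpha$ and $\beta$ gives $(\rho(G)-q+1)\alpha \le (n-q)\beta$ and $(\rho(G)-\delta+1)\beta \le q\alpha$; multiplying yields $(\rho(G)-q+1)(\rho(G)-\delta+1) \le q(n-q)$, equivalent to $\rho(G)\le\rho(J)$. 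Expanding $\rho(J) = \tfrac{q+\delta-2}{2} + \tfrac12\sqrt{(q+\delta-2)^2 + 4q(n-q) - 4(q-1)(\delta-1)}$ for large $n$ yields the stated asymptotic. The main obstacle is the subclaim $\Delta(G[Y])\le\delta-1$: the embedding must route the minimum-degree vertex $a^* \in A$ to $v$ so that only $\delta$ $Y$-internal neighbors are required, with the spanning $K_{q,n-q}$ absorbing all other tree edges freely.
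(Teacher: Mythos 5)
Your argument that $S_{n,q}^1$ is $F$-free contains a genuine gap. You claim that since $A$ is independent in $F$ and $V(K_q)$ dominates $S_{n,q}^1$, the image $\phi(A)$ cannot intersect $V(K_q)$; but a subgraph embedding $\phi\colon F\hookrightarrow S_{n,q}^1$ preserves adjacency, not non-adjacency, so $\phi(A)$ need not be an independent set of $S_{n,q}^1$. In particular, $\phi(a)\in V(K_q)$ being adjacent in $S_{n,q}^1$ to every other vertex of $\phi(A)$ is no contradiction. The same error recurs a line later when you assert $\phi(A)$ uses at most one endpoint of the extra edge $uv$ -- it could use both. Once these steps fail, the bound $|N_{S_{n,q}^1}(\phi(A))|\le q+1$ and the forced equality $\ell=2q+2$ are unsupported. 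The paper's Lemma~\ref{lem3.5} handles precisely the possibility you dismissed: it splits $\phi(A)$ into $A_1\subseteq Y_1=V(K_q)$ and $A_2\subseteq Y_2$, allows both to be nonempty, introduces the set $B_1\subseteq Y_1$ of $F$-neighbours of $A_2$, and derives $|A_2|\le|B_1|$ by an edge count in the forest $F[A_2\cup B_1]$ (using $\delta\ge 2$ and the fact that $Y_2$ spans only one edge), concluding $q+1=|A_1|+|A_2|\le|Y_1|=q$.

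Your upper-bound argument is sound and essentially matches the paper's: the reduction to $\Delta(G^\star[R])\le\delta-1$ by mapping the minimum-degree vertex $a^\star\in A$ to the offending vertex of $R$ and the rest of $A$ into $L$ is exactly the paper's step. Where the paper then cites Tait's quotient-matrix bound (Lemma~\ref{lemma3.1.}), you instead evaluate the Perron eigenvector at the maximizers of each block to get $(\rho-q+1)(\rho-\delta+1)\le q(n-q)$ directly; this is a self-contained, elementary derivation of the same inequality $\rho\le\rho(J)$, and is a legitimate minor variation. To complete the proof you would need to replace the flawed $F$-freeness argument with one along the lines of Lemma~\ref{lem3.5}.
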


Obviously, $\beta(F)\leq |A|=q+1$.
If $\beta(F)=q+1$,
then let $\mathcal{A}=\{K_{q+1}\}$ and otherwise,
   $$\mathcal{A}=\{F[S]~|~S~\text{is a covering of}~F~\text{with}~|S|\leq q\}.$$
Denote by ${\rm ex}(n,\mathcal{A})$ the maximum size in any $\mathcal{A}$-free graph of order $n$,
and ${\rm EX}(n,\mathcal{A})$ the family of $n$-vertex $\mathcal{A}$-free graphs with ${\rm ex}(n,\mathcal{A})$ edges.
Now we give the characterization of the spectral extremal graphs for ${\rm spex}(n,F)$ when $\delta=1$.

\begin{thm}\label{theorem1.4}
For $q\geq 1$ and sufficiently large $n$, ${\rm SPEX}(n,F)\subseteq \mathcal{H}(n,q,\mathcal{A})$ if and only if $\delta=1$,
where $\mathcal{H}(n,q,\mathcal{A})=\{Q_{\mathcal{A}}\nabla (n-q)K_1~|~Q_{\mathcal{A}}\in {\rm EX}(q,\mathcal{A})\}$.
Furthermore,\\
(i) ${\rm SPEX}(n,F)=\{K_{q,n-q}\}$ if and only if $\delta=1$ and ${\rm EX}(q,\mathcal{A})\cong \{qK_1\}$;\\
(ii) ${\rm SPEX}(n,F)=\{S_{n,q}^{0}\}$ if and only if $\delta=1$ and $\beta(F)=q+1$.
\end{thm}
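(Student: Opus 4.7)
My plan is to prove the main biconditional first and then deduce (i) and (ii) by unpacking $\mathcal{H}(n,q,\mathcal{A})$. For the contrapositive of the reverse implication, assume $\delta \geq 2$. By Theorem~\ref{theorem1.3}, $S_{n,q}^{1}$ is $F$-free, so $\mathrm{spex}(n,F) \geq \rho(S_{n,q}^{1}) > \rho(S_{n,q}^{0})$, the strict inequality following from $S_{n,q}^{1} = S_{n,q}^{0} + e$. Every $G' \in \mathcal{H}(n,q,\mathcal{A})$ has the form $Q' \nabla (n-q)K_1$ with $Q' \subseteq K_q$, so $G' \subseteq S_{n,q}^{0}$ and $\rho(G') \leq \rho(S_{n,q}^{0})$. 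Thus no member of $\mathcal{H}(n,q,\mathcal{A})$ is extremal, giving $\mathrm{SPEX}(n,F) \not\subseteq \mathcal{H}(n,q,\mathcal{A})$.

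\textbf{Forward direction: structure of $G$.} Suppose $\delta = 1$ and let $G \in \mathrm{SPEX}(n,F)$. By Theorem~\ref{theorem1.1}, $G$ contains a spanning $K_{q,n-q}$; denote its two sides by $X$ (with $|X|=q$) and $Y$. I first show that $Y$ is independent: if $uv \in E(G[Y])$, then using $\delta = 1$ I pick a pendant $a^{*} \in A$ with unique neighbor $b^{*} \in B$ and embed $F$ into $G$ by setting $\phi(a^{*}) = u$, $\phi(b^{*}) = v$, mapping $A \setminus \{a^{*}\}$ bijectively onto $X$, and injecting $B \setminus \{b^{*}\}$ into $Y \setminus \{u,v\}$; each edge of $F$ is either $\{a^{*},b^{*}\}$ (realized by $uv$) or has one endpoint in $A \setminus \{a^{*}\}$ (mapped into $X$) and the other in $B$ (mapped into $Y$), hence is realized by the spanning $K_{q,n-q}$. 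Writing $Q = G[X]$, I next check that $Q$ is $\mathcal{A}$-free: if $\beta(F) = q+1$ this is trivial since $\mathcal{A} = \{K_{q+1}\}$ and $|V(Q)| = q$, while otherwise a copy of $F[S] \subseteq Q$ for some covering $S$ with $|S| \leq q$ would permit an embedding of $F$ into $G$ by placing $S$ onto this copy in $X$ and injecting $V(F) \setminus S$ (independent in $F$) into $Y$, with all cross-edges again supplied by $K_{q,n-q}$.

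\textbf{Maximality and the two equalities.} Running the previous embeddings in reverse shows that for every $\mathcal{A}$-free $Q'$ on $q$ vertices, $Q' \nabla (n-q)K_1$ is $F$-free (any embedding of $F$ would produce a covering $S' \subseteq V(F)$ of $F$ with $|S'| \leq q$ and $F[S'] \subseteq Q'$). Combined with the asymptotic
\begin{equation*}
\rho\bigl(Q' \nabla (n-q)K_1\bigr) = \sqrt{qn} + \frac{e(Q')}{q} + O(n^{-1/2}),
\end{equation*}
obtained for large $n$ from Perron-vector perturbation, this makes $\rho(Q' \nabla (n-q)K_1)$ strictly increasing in $e(Q')$, so extremality of $G$ forces $e(Q) = \mathrm{ex}(q,\mathcal{A})$, i.e., $Q \in \mathrm{EX}(q,\mathcal{A})$ and $G \in \mathcal{H}(n,q,\mathcal{A})$. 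For (i): $\mathcal{H}(n,q,\mathcal{A}) = \{K_{q,n-q}\}$ iff $\mathrm{EX}(q,\mathcal{A}) = \{qK_1\}$. For (ii): $\mathcal{H}(n,q,\mathcal{A}) = \{S_{n,q}^{0}\}$ iff $\mathrm{EX}(q,\mathcal{A}) = \{K_q\}$, which—since $K_q$ contains every graph on at most $q$ vertices as a subgraph—is equivalent to $\mathcal{A} = \{K_{q+1}\}$, i.e., $\beta(F) = q+1$. In both cases nonemptiness of $\mathrm{SPEX}(n,F)$ upgrades the inclusion to equality.

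\textbf{Main obstacle.} The combinatorial embeddings are routine once Theorem~\ref{theorem1.1} is in hand; the technical heart is the spectral monotonicity in $e(Q')$. Since the partition $(X,Y)$ is not equitable unless $Q'$ is regular, $\rho(Q' \nabla (n-q)K_1)$ cannot simply be read off a $2 \times 2$ quotient. One must show, via a Perron-vector perturbation (nearly constant on each side for large $n$) or a direct $x^{\top} A x$ comparison with a calibrated test vector, that the $O(1)$ correction to $\sqrt{qn}$ is precisely $e(Q')/q$, and thereby obtain strict monotonicity for all sufficiently large $n$.
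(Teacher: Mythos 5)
Your proof takes essentially the same route as the paper: Theorem~\ref{theorem1.1} pins down the spanning $K_{q,n-q}$, the combinatorial embeddings using a degree-one vertex of $A$ show $Y$ is independent when $\delta=1$ and that $Q=G[X]$ must be $\mathcal{A}$-free, and $S_{n,q}^1$ being $F$-free when $\delta\geq 2$ (Lemma~\ref{lem3.5}) handles the reverse direction. The one genuine divergence is in how you establish that extremality forces $e(Q)=\mathrm{ex}(q,\mathcal{A})$. You propose the asymptotic $\rho(Q'\nabla(n-q)K_1)=\sqrt{qn}+e(Q')/q+O(n^{-1/2})$ via Perron-vector perturbation and then invoke monotonicity in $e(Q')$; you explicitly flag this as the unproved ``technical heart.'' The paper sidesteps the perturbation expansion entirely: it uses the Rayleigh quotient with the actual Perron vector $X$ of $G^{\star}$, together with Lemma~\ref{lemma2.4.} (entries on $L$ all lie in $[1-\varepsilon,1]$), to get
$\sum_{uv\in E(Q_{\mathcal{A}})}x_ux_v-\sum_{uv\in E(G^{\star}[L])}x_ux_v\geq e(Q_{\mathcal{A}})(1-\varepsilon)^2-e(G^{\star}[L])>0$
whenever $e(G^{\star}[L])<e(Q_{\mathcal{A}})\leq\binom{q}{2}$. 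This is sharper in exactly the place your sketch leaves open: it needs no uniform control of error terms over all $\mathcal{A}$-free $Q'$ and no equitable-partition or quotient-matrix discussion, only the coarse eigenvector bound already available. Your asymptotic, which I have checked, is in fact correct, and since there are only finitely many $q$-vertex graphs the uniformity issue can be resolved by taking a maximum of thresholds, so your strategy would go through with more work; but as written the monotonicity step is asserted rather than proved. The deductions of (i) and (ii) from the main biconditional (via $\mathcal{H}=\{K_{q,n-q}\}$ iff $\mathrm{EX}(q,\mathcal{A})=\{qK_1\}$, and $\mathcal{H}=\{S_{n,q}^0\}$ iff $\mathcal{A}=\{K_{q+1}\}$ iff $\beta(F)=q+1$) are correct, and the ``only if'' halves are also covered once one observes, as you implicitly do, that $\delta\geq 2$ forces every extremal graph to have an edge inside $R$ and hence lie outside $\mathcal{H}$.
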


Particularly, we shall show that ${\rm SPEX}(n,S_{a+1,b+1})=\{K_{a,n-a}\}$,
where the double star $S_{a+1,b+1}$ is obtained from $K_{1,a}$ and $K_{1,b}$ by joining the centers with a new edge for $a\leq b$.
If $F=S_{a+1,b+1}$, then $q=a$, $\delta=1$ and $\beta(F)=2$.
By the definition of $\mathcal{A}$, we can see that ${\rm EX}(k,\mathcal{A})= \{aK_1\}$.
By Theorem \ref{theorem1.4} (i), ${\rm SPEX}(n,F)=\{K_{a,n-a}\}$ for sufficiently large $n$.

However, it seems difficult to determine ${\rm SPEX}(n,F)$ when $\delta\geq 2$, and so we leave this as a problem.
In the following, we provide asymptotic spectral extremal values of all trees.
Note that $\rho(K_{q,n-q})=\sqrt{q(n-q)}$.
From \cite{Nikiforov2010} we know $\rho(S_{n,q}^0)=\frac{q-1}{2}+\sqrt{qn-\frac{3q^2+2q+1}{4}}$.
Combining these with Theorems \ref{theorem1.3} and \ref{theorem1.4}, we have
\begin{eqnarray}\label{alg001}
{\rm spex}(n,F)=\sqrt{qn}+O(1).
\end{eqnarray}

A tree of order $\ell\geq 4$ is said to be a \emph{spider} if it contains at most one vertex of degree at least 3.
The vertex of degree at least 3 is called the center of the spider (if any vertex is of degree 1 or 2, then the spider is a path and any vertex of degree two can be taken to be the center).
A leg of a spider is a path from the center to a leaf, and the length of a leg is the number of its edges.
Let $k\geq 2$ and let $F$ be a spider of order $2k+3$ with $r$ legs of odd length and $s$ legs of length 1.
If $r\geq 3$ and $s\geq 1$, then $q=|A|-1=\frac12((2k+3)-(r+s)-1)\leq k-1$.
By \eqref{alg001}, we get
$${\rm spex}(n,F)=\sqrt{qn}+O(1)<\sqrt{kn}+O(1)=\rho(S_{n,k}^0)$$
 for sufficiently large $n$.
This means that every graph $G$ of order $n$ with $\rho(G)\geq \rho(S_{n,k}^0)$ contains $F$ as a subgraph.
Then we can derive the following result on spiders,
which was originally proved by Liu, Broersma and Wang \cite{LHW2022}.

\begin{cor}\label{theorem1.6}\emph{(\cite{LHW2022})}\label{cor1.1}
Let $k\geq 2$ and let $F$ be a spider of order $2k+3$ with $r$ legs of odd length and $s$ legs of length 1.
If $r\geq 3$, $2s-r\geq 2$ and $n$ is sufficiently large, then every graph $G$ of order $n$ with $\rho(G)\geq \rho(S_{n,k}^0)$ contains $F$ as a subgraph.
\end{cor}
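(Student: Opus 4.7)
The plan is to derive this as an immediate consequence of the asymptotic bound \eqref{alg001} together with the bipartition computation performed in the paragraph immediately preceding the statement. Two things need to be verified: (a) the hypotheses $r\geq 3$ and $2s-r\geq 2$ force $q\leq k-1$, where $q=|A|-1$ with $A$ the smaller partite set of $F$; and (b) the resulting bound on ${\rm spex}(n,F)$ is strictly smaller than $\rho(S_{n,k}^{0})$ for all large $n$.

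For (a), the preceding paragraph has already carried out the bipartition computation for a spider: placing the centre on the larger side and tracking parities along each leg (only odd-length legs produce an imbalance) gives $q=\tfrac12((2k+3)-(r+s)-1)=k+1-\tfrac{r+s}{2}$. Now $2s-r\geq 2$ combined with $r\geq 3$ implies $s\geq (r+2)/2\geq 5/2$, so $s\geq 3$, and in particular $s\geq 1$. Together with $r\geq 3$ this yields $r+s\geq 4$, and hence $q\leq k-1$.

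For (b), invoking \eqref{alg001} we obtain ${\rm spex}(n,F)=\sqrt{qn}+O(1)\leq \sqrt{(k-1)n}+O(1)$, while Nikiforov's formula gives $\rho(S_{n,k}^{0})=\tfrac{k-1}{2}+\sqrt{kn-\tfrac{3k^{2}+2k+1}{4}}=\sqrt{kn}+O(1)$. Since $\sqrt{kn}-\sqrt{(k-1)n}=(\sqrt{k}-\sqrt{k-1})\sqrt{n}=\Theta(\sqrt{n})$, this dominates the $O(1)$ correction, so ${\rm spex}(n,F)<\rho(S_{n,k}^{0})$ for all sufficiently large $n$. Thus any graph $G$ on $n$ vertices with $\rho(G)\geq \rho(S_{n,k}^{0})>{\rm spex}(n,F)$ cannot be $F$-free, i.e., $F\subseteq G$.

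I do not anticipate a serious obstacle, since the heavy lifting (\eqref{alg001} together with the bipartition count) is already available; what remains is the short arithmetic implication $2s-r\geq 2\Rightarrow s\geq 3\Rightarrow r+s\geq 4$ and the asymptotic comparison, where the $\Theta(\sqrt n)$ leading gap between $\sqrt{kn}$ and $\sqrt{qn}$ swamps the constant error. The only point that would require any attention is to make $n$ large enough that this leading-order gap genuinely dominates the $O(1)$ corrections coming from both \eqref{alg001} and the explicit expression for $\rho(S_{n,k}^{0})$.
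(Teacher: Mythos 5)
Your high-level strategy coincides with the paper's own derivation of this corollary: deduce $q\leq k-1$ from a bipartition count, then invoke \eqref{alg001} together with $\rho(S_{n,k}^0)=\sqrt{kn}+O(1)$ to get ${\rm spex}(n,F)<\rho(S_{n,k}^0)$ for large $n$. Part (b) of your argument is fine. But part (a) rests on an incorrect formula, one which the paper's preceding paragraph also mis-states: you correctly describe the mechanism (``only odd-length legs produce an imbalance''), yet then write $q=\frac{1}{2}\bigl((2k+3)-(r+s)-1\bigr)$, which also depends on $s$. Since the $s$ legs of length $1$ are already counted among the $r$ odd-length legs and contribute no extra imbalance, the correct count (compare \eqref{alg018}) is $|A|=(\ell-r+1)/2$ for $r\geq 1$, i.e.\ $q=k+1-\frac{r}{2}$. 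A concrete check exposes the error: for the spider with legs of lengths $1,1,1,3$ one has $\ell=7$, $k=2$, $r=4$, $s=3$, $|A|=2$, $q=1$, while the formula you use returns $q=-\frac{1}{2}$.

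With the corrected formula, obtaining $q\leq k-1$ requires $r\geq 4$, not merely $r+s\geq 4$. This does hold, but for a reason neither your write-up nor the paper makes explicit: the two partite sets of $F$ have sizes summing to $\ell=2k+3$ (odd) and differing by $r-1$, so both sizes are integers only if $r$ is even; combined with $r\geq 3$, this forces $r\geq 4$ and hence $q=k+1-\frac{r}{2}\leq k-1$. Your conclusion is therefore true, but the justification you give for it is not. Once the parity of $r$ is used, the hypothesis $2s-r\geq 2$ plays no further role (it is carried over from the statement in \cite{LHW2022}), so your auxiliary deduction $s\geq 3$ is harmless but unnecessary.
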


The Erd\H{o}s-S\'{o}s Conjecture has been confirmed for some special families of spiders (see \cite{Fan2013,Fan2016,Fan2007,Wozniak1996}).
Recently, Fan, Hong and Liu \cite{Fan2018+} has resolved this conjecture for all spiders.
The spectral Erd\H{o}s-S\'{o}s Conjecture has also been confirmed for several classes of spiders (see \cite{LHW2022}).
In this paper, we completely characterize ${\rm SPEX}(n,F)$ for all spiders $F$ with $q\geq 1$.

\begin{thm}\label{theorem1.5}
Let $r_1,r_2,r_3,r,s$ and $\ell$ be non-negative integers with $r=r_1+r_2+r_3$ and $\ell\geq 4$,
and let $F$ be a spider of order $\ell$ with $r_1$ legs of odd length at least 5, $r_2$ legs of length 3, $r_3$  legs of length 1 and $s$ legs of even length.
Let $n$ be sufficiently large. Then
 $${\rm SPEX}(n,F)=\left\{
                                       \begin{array}{ll}
                                        \{S_{n,(\ell-r-1)/2}^{0}\}  & \hbox{if $s\geq 1$ and $r\geq 1$,} \\
                                        \{S_{n,(\ell-3)/2}^{1}\}  & \hbox{if $s\geq 1$ and $r=0$,}\\
                                         \{S_{n,(\ell-r-1)/2}^{1}\}  & \hbox{if $s=0$ and $r_1\geq 1$,} \\
                                        \{S_{n,(\ell-r-1)/2}^{r-1}\}  & \hbox{if  $s=0$, $r_1=0$, $r_2\geq 1$ and $r_3\in \{0,1\}$,}\\
                                        \{S_{n,(\ell-r-1)/2}^{\lfloor(2n-\ell+r+1)/{4}\rfloor}\}  & \hbox{if  $s=0$, $r_1=0$, $r_2\geq 1$ and $r_3\geq 2$.}
                                       \end{array}
                                     \right.
$$

\end{thm}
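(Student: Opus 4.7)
The plan is to reduce the problem to analysing the one-parameter family $\{S_{n,q}^{p}\}$ and then pick the largest $p$ for which $S_{n,q}^{p}$ avoids $F$. The first step is to compute the relevant invariants of a spider $F$. Labelling each leg of $F$ by distance from the center $c$ and counting the sides of the bipartition, one finds $q=|A|-1=(\ell-r-1)/2$ when $r\ge 1$ (taking $c\in A$) and $q=(\ell-3)/2$ when $r=0$ (taking $c\in B$); the same counting gives $\beta(F)=q+1$ in both regimes. The minimum-degree quantity $\delta=\min_{x\in A}d_F(x)$ equals $1$ exactly when $A$ contains some leaf of $F$, which happens iff $F$ has a leg of even length; otherwise $\delta\ge 2$.

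With these invariants in hand, cases 1 and 2 follow immediately from earlier theorems. In case 1 ($s\ge 1$, $r\ge 1$) one has $\delta=1$ and $\beta(F)=q+1$, so Theorem \ref{theorem1.4}(ii) gives ${\rm SPEX}(n,F)=\{S_{n,q}^{0}\}$. In case 2 ($s\ge 1$, $r=0$) one has $\delta\ge 2$ and $\beta(F)=(\ell-1)/2$, so Theorem \ref{theorem1.2}(ii) gives ${\rm SPEX}(n,F)=\{S_{n,(\ell-3)/2}^{1}\}$. In the three remaining cases $s=0$ and $\delta\ge 2$, so Theorems \ref{theorem1.2} and \ref{theorem1.4} do not apply directly; here I would use Theorem \ref{theorem1.1} to write any extremal $G$ as $X\cup Y$ with $|X|=q$, a spanning $K_{q,n-q}$ present, and extra edges $E_X\subseteq\binom{X}{2}$ and $E_Y\subseteq\binom{Y}{2}$. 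A Perron-eigenvector comparison anchored on the sharp upper bound $\rho(G)\le\sqrt{qn}+\frac{q+\delta-2}{2}+O(n^{-1/2})$ of Theorem \ref{theorem1.3} then forces $E_X$ to induce a clique and $E_Y$ to be a matching, so $G=S_{n,q}^{p}$ for some $p\in\{0,1,\dots,\lfloor(n-q)/2\rfloor\}$.

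The combinatorial core is to pin down the largest $F$-free index $p^{\star}(F)$. In $S_{n,q}^{p}$, the only length-$3$ path rooted at a clique vertex $c$ that exits and returns to the clique has the form $c\!-\!y\!-\!y'\!-\!c'$ with $yy'$ a matching edge and $c'\in X\setminus\{c\}$, and no path rooted at a clique vertex and leaving $X$ can reach length $\ge 5$ when $p=1$. In case 3, this together with a short case split on whether the center of $F$ sits in $X$ or on a matching edge rules out $p\ge 2$ and shows that $S_{n,q}^{1}$ is $F$-free, giving $p^{\star}(F)=1$. In cases 4 and 5 every leg has length $1$ or $3$, so $q=r_2$, and realising each length-$3$ leg consumes one matching edge together with one distinct anchor in $X\setminus\{c\}$. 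Since $|X\setminus\{c\}|=r_2-1$, a case split on the position of the center of $F$ yields $p^{\star}(F)=r-1$ when $r_3\le 1$; if $r_3\ge 2$, any extra length-$1$ leg can be absorbed by a free endpoint of an otherwise-unused matching edge, so every $p\le\lfloor(n-q)/2\rfloor$ is $F$-free and $p^{\star}(F)=\lfloor(n-q)/2\rfloor$.

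The main obstacle will be the embedding analysis underlying cases 4 and 5: one must enumerate, for a hypothetical embedding of $F$ into $S_{n,q}^{p}$, all possible locations of the center of $F$ and then match the $r_2$ length-$3$ legs to distinct matching edges together with distinct clique anchors without double counting vertices; the transition at $r_3=2$ is particularly delicate because a single extra length-$1$ leg shifts $p^{\star}(F)$ from the constant $r-1$ to the linear quantity $\lfloor(n-q)/2\rfloor$. A secondary technical difficulty is that the reduction to $E_X$ being a clique and $E_Y$ being a matching relies on the refined $O(n^{-1/2})$ correction in Theorem \ref{theorem1.3}, since several competing graphs $S_{n,q}^{p}$ lie inside an $O(1)$ spectral-radius window.
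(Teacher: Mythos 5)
Your high-level plan—reduce to the one-parameter family $S_{n,q}^{p}$ via Theorem~\ref{theorem1.1} and the matching structure of $G^{\star}[R]$, then pin down the largest $F$-free index $p^{\star}(F)$—matches the paper's strategy, and your handling of cases~1 and~2 via Theorems~\ref{theorem1.4} and~\ref{theorem1.2} is correct (arguably cleaner than the paper's for $r=0$, $s\ge 1$, where the paper re-derives the $2K_2$-free claim by hand instead of simply invoking Theorem~\ref{theorem1.2}(ii)). However, two points need attention.

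First, a minor error: your criterion ``$\delta=1$ iff $F$ has a leg of even length'' is false. When $r=0$ and $s\ge 1$, all legs are even, $\ell$ is odd, and the smaller part $A$ is the set of vertices at \emph{odd} distance from $v^{\star}$, which contains no leaves, so $\delta=2$. The correct statement (equation~\eqref{alg017} in the paper) is $\delta=1$ iff $F$ has \emph{both} an odd-length and an even-length leg, i.e.\ $r\ge 1$ and $s\ge 1$. You apply the right conclusion in each case, so this is only a misstatement of the intermediate lemma, but it should be fixed.

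Second, and more seriously, your treatment of cases~4 and~5 is not a proof. Showing $G^{\star}\subseteq S_{n,q}^{r-1}$ (via the $rK_{2}$-free and $P_{3}$-free properties of $G^{\star}[R]$) is the easy half; the real difficulty, which you flag but do not resolve, is proving that $S_{n,q}^{r-1}$ is actually $F$-free when $r_{3}\le 1$. Your sketch---``realising each length-$3$ leg consumes one matching edge together with one distinct anchor in $X\setminus\{c\}$''---presupposes a particular embedding shape that need not hold: a length-$3$ leg $v^{\star}\!-\!a\!-\!b\!-\!d$ can also be placed with $b\in X$ and $a,d$ isolated in $Y$, using a clique anchor but no matching edge, so the naive matching-edge count does not close. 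The paper's proof instead defines $r'$ as the minimum $p$ with $F\subseteq S_{n,q}^{p}$, notes $2\le r'\le r$, and rules out $r'<r$ by \emph{contracting} the $r'$ matching edges used by an embedding (at most one per leg, by the $P_3$-free property of $Y_2$) to obtain a smaller spider $F'$ of order $\ell-r'$ with exactly $r-r'\ge 1$ odd legs embedded in $S_{n-r',q}^{0}$; computing $\beta(F')=(\ell-r+1)/2=q+1$ and applying Lemma~\ref{lem3.3} then gives a contradiction. That contraction step is the technical core of the theorem and is absent from your proposal. Likewise for case~5, your phrase ``absorbed by a free endpoint of an otherwise-unused matching edge'' reads as a construction of an embedding rather than an obstruction; the paper's clean argument is that $F-v^{\star}$ consists of $r_{2}$ copies of $P_{3}$ plus $r_{3}\ge 2$ isolated vertices, and since $Y_{2}$ is $P_{3}$-free each of the $r_{2}$ $P_3$'s plus the $P_3$ formed by $w_{1}v^{\star}w_{2}$ must each contribute a vertex to $Y_{1}$, forcing $|Y_1|\ge r_2+1>(\ell-r-1)/2$, a contradiction. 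Finally, your claim that the refined $O(n^{-1/2})$ bound of Theorem~\ref{theorem1.3} is what forces $E_X$ to be a clique is misleading: the matching structure of $E_Y$ comes from the combinatorial degree bound $\Delta\le\delta-1=1$ (equivalently Claim~\ref{claim1.2}), and the completeness of $E_X$ is not argued separately but follows from the final inclusion $G^{\star}\subseteq S_{n,q}^{p}$ together with the $F$-freeness of $S_{n,q}^{p}$ and spectral maximality.
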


%
%

\section{Proof of Theorem \ref{theorem1.1}}\label{section2}

Before beginning our proof, we first give some  notations  not defined previously.
Let $G$ be a simple graph. We use $V(G)$ to denote
the vertex set, $E(G)$ the edge set, $|V(G)|$ the number of vertices, $e(G)$ the number of edges,
$\nu(G)$ the maximum number of independent edges, respectively.
Given a vertex $v\in V(G)$ and  a vertex subset $S\subseteq V(G)$,
we denote  $N_G(v)$ the set of neighbors of $v$ in $G$,
and let $N_S(v)=N_G(v)\cap S$, $d_S(v)=|N_S(v)|$.
Let $G[S]$ (resp. $G-S$) be the subgraph of $G$ induced by $S$ (resp. $V(G)\setminus S$).
Since $|A|+|B|=\ell$ and $|A|\leq |B|$, we have $|A|\leq \ell/2$.
Moreover,
\begin{eqnarray}\label{alg002}
 \beta(F)\leq |A|\leq \ell/2.
\end{eqnarray}
By the definition of $q$, we obtain
\begin{eqnarray}\label{alg003}
F\not\subseteq K_{q,n-q}~\text{and}~F\subseteq K_{q+1,\ell}.
\end{eqnarray}
A standard graph theory exercise shows that
for any tree $F$ with $\ell\geq 2$ vertices,
\begin{eqnarray}\label{alg004}
\frac12(\ell-2) n\leq {\rm ex}(n,F)\leq (\ell-2) n.
\end{eqnarray}

In this section, we always assume that $n$ is sufficiently large and $G^{\star}$ is an extremal graph to ${\rm spex}_{\mathcal{P}}(n,F)$, and let $\rho^{\star}$ denote its spectral radius.
By Perron-Frobenius theorem, there exists a non-negative eigenvector $X=(x_1,\ldots,x_n)^\mathrm{T}$ corresponding to $\rho^{\star}$.
Choose a vertex $u^{\star}\in V(G^{\star})$ with $x_{u^{\star}}=\max\{x_i~|~i=1,2,\dots,n\}=1$.
We also choose a positive constant $\varepsilon$ and a positive integer $\phi$ satisfying
\begin{eqnarray}\label{alg005}
 \varepsilon<\frac{1}{(2\ell )^4}~~\text{and}~~
 \frac{3}{(2\ell)^{\phi-1}}<\min\left\{q\varepsilon, \frac{\varepsilon}{4\ell}\right\},
\end{eqnarray}
which will be frequently used later. First, we give a rough estimation on $\rho^{\star}$.

\begin{lem}\label{lemma2.1.}
For sufficiently large $n$, we have $\sqrt{q(n-q)}\leq \rho^{\star}\leq \sqrt{2\ell n}.$
\end{lem}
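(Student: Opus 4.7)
} The statement splits cleanly into a lower bound and an upper bound, and both sides should follow directly from the structural facts already recorded in \eqref{alg003} and \eqref{alg004}, without needing to exploit any fine structure of $G^\star$.

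\emph{Lower bound.} My plan is to exhibit a concrete $F$-free graph of order $n$ whose spectral radius is $\sqrt{q(n-q)}$ and then invoke the extremality of $G^\star$. The natural candidate is $K_{q,n-q}$: by \eqref{alg003}, $F\not\subseteq K_{q,n-q}$, so $K_{q,n-q}$ is itself $F$-free. Hence
\[
\rho^\star \;=\; {\rm spex}(n,F)\;\geq\; \rho(K_{q,n-q})\;=\;\sqrt{q(n-q)},
\]
which is the desired lower bound.

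\emph{Upper bound.} Here I would use the classical inequality $\rho(H)\leq \sqrt{2e(H)}$ valid for any graph $H$, which comes from $\rho(H)^2 \leq \sum_i \lambda_i(H)^2 = {\rm tr}(A(H)^2) = 2e(H)$. Since $G^\star$ is $F$-free on $n$ vertices, the upper estimate in \eqref{alg004} gives $e(G^\star)\leq {\rm ex}(n,F)\leq (\ell-2)n$. Combining these two bounds yields
\[
\rho^\star \;\leq\; \sqrt{2\,e(G^\star)}\;\leq\; \sqrt{2(\ell-2)n}\;\leq\; \sqrt{2\ell n},
\]
which completes the upper bound.

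\emph{Difficulty assessment.} There is really no obstacle in either direction: the lower bound is a one-line consequence of \eqref{alg003}, and the upper bound only needs the trace inequality for the adjacency matrix together with the Erd\H{o}s--S\'os-type edge bound \eqref{alg004}. The purpose of the lemma is purely to give a crude two-sided estimate, $\rho^\star = \Theta(\sqrt{n})$, that will be used repeatedly to control eigenvector entries and tail contributions in the arguments to come; sharper constants are not needed at this stage, so I would resist the temptation to optimize either inequality.
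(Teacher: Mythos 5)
Your proof is correct. The lower bound is identical to the paper's: $K_{q,n-q}$ is $F$-free by \eqref{alg003}, so $\rho^\star \geq \rho(K_{q,n-q}) = \sqrt{q(n-q)}$. For the upper bound you both ultimately establish $(\rho^\star)^2 \leq 2e(G^\star) \leq 2\ell n$, but by different means: you invoke the global trace inequality $\rho(H)^2 \leq \mathrm{tr}(A(H)^2) = 2e(H)$, whereas the paper counts walks of length two from the vertex $u^\star$ carrying the maximum eigenvector entry, writing $(\rho^{\star})^2 x_{u^{\star}} = \sum_{u\in N_1(u^{\star})}\sum_{w\in N_1(u)} x_w \leq 2e(G^\star)$. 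Your trace-based derivation is arguably cleaner and more self-contained; the paper's local walk-counting argument is presumably chosen because it warms up the exact technique (expanding $(\rho^\star)^2 x_u$ and splitting the sum over neighborhoods) that is reused in a more refined form in Lemmas \ref{lemma2.2.}--\ref{lemma2.4.}. Either route is fine here; nothing in the later arguments depends on which derivation of $(\rho^\star)^2 \leq 2e(G^\star)$ is used.
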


\begin{proof}
By \eqref{alg003}, $K_{q,n-q}$ is $F$-free.
Hence, $\rho^{\star}\geq \rho(K_{q,n-q})=\sqrt{q(n-q)}$ as $G^{\star}$ is an extremal graph to ${\rm spex}_{\mathcal{P}}(n,F)$.
Given a vertex $u\in V(G^{\star})$, denote by $N_i(u)$ the set of vertices
at distance $i$ from $u$.
Now we prove the upper bound.
Note that
$$(\rho^{\star})^2 x_{u^{\star}}=\sum_{u\in N_1(u^{\star})}\sum_{w\in N_1(u)}x_w
\leq |N_1(u^{\star})|+2e\big(N_1(u^{\star})\big)+e\big(N_1(u^{\star}),N_2(u^{\star})\big)
\leq 2e(G^{\star}).$$
Combining \eqref{alg004} gives $(\rho^{\star})^2\leq 2e(G^{\star})\leq 2\ell n$, which leads to $\rho^{\star}\leq \sqrt{2\ell n}$, as desired.
\end{proof}

Set $L^{\eta}=\{u\in V(G^{\star})~|~x_u\ge (2\ell)^{-\eta}\}$ for some positive integer $\eta$.
We shall constantly give an upper bound of $|L^{\eta}|$ and  a lower bound for
degrees of vertices in $L^{\eta}$ (see Lemmas \ref{lemma2.2.}--\ref{lemma2.5.}).

\begin{lem}\label{lemma2.2.}
For every positive integer $\mu$, we have $|L^{\mu}|\leq (2\ell)^{\mu+2}$.
\end{lem}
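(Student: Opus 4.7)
My plan is to combine the eigenvector equation with the $F$-free edge bound from \eqref{alg004}. The starting point is $\rho^{\star} x_u = \sum_{v\sim u} x_v$; since every coordinate of $X$ is at most $x_{u^{\star}} = 1$, this gives $d(u) \geq \rho^{\star} x_u$ for all $u$. For $u \in L^{\mu}$ the threshold condition $x_u \geq (2\ell)^{-\mu}$ and Lemma \ref{lemma2.1.} together yield the key lower bound
$$d(u) \geq \rho^{\star}(2\ell)^{-\mu} \geq \sqrt{q(n-q)}\,(2\ell)^{-\mu},$$
so every vertex of $L^{\mu}$ has degree of order at least $\sqrt{n}/(2\ell)^{\mu}$ once $n$ is large.

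I would then sum this inequality over $L^{\mu}$ and compare with the total degree sum $2e(G^{\star})$. Invoking $e(G^{\star}) \leq (\ell-2)n$ from \eqref{alg004} gives
$$|L^{\mu}|\cdot\rho^{\star}(2\ell)^{-\mu} \;\leq\; \sum_{u \in L^{\mu}} d(u) \;\leq\; 2e(G^{\star}) \;\leq\; 2(\ell-2)n,$$
which rearranges to the preliminary estimate $|L^{\mu}| \leq 2(\ell-2)n(2\ell)^{\mu}/\rho^{\star}$.

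The remaining step is to promote this into the constant bound $(2\ell)^{\mu+2}$. The naive substitution $\rho^{\star} \geq \sqrt{q(n-q)}$ leaves a spurious $\sqrt{n}$ factor, and removing this $n$-dependence is the main obstacle. I expect the fix is to iterate the eigenvector equation to length-two walks via $(\rho^{\star})^2 x_u = \sum_{v\sim u}\sum_{w\sim v} x_w$, and to exploit that $G^{\star}$ is $K_{q+1,\ell-q-1}$-free: since $F$ is a tree with partite parts of sizes $q+1$ and $\ell-q-1$, it embeds in $K_{q+1,\ell-q-1}$, forcing strong codegree restrictions on $G^{\star}$. Combining these codegree restrictions with the walk-counting identity and the lower bound $\rho^{\star} = \Theta(\sqrt{n})$ should sharpen the degree-sum argument enough to cancel the $n$-dependence and deliver the advertised constant $(2\ell)^{\mu+2}$.
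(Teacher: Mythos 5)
Your preliminary step is exactly right and matches the paper: $\rho^{\star}x_u\leq d(u)$ gives $d(u)\gtrsim\sqrt{n}/(2\ell)^{\mu}$ for $u\in L^{\mu}$, and summing against the $F$-free edge bound \eqref{alg004} yields $|L^{\mu}|=O(\sqrt{n}\,(2\ell)^{\mu})$ (the paper records this as $|L^{\eta}|\leq n^{0.6}$). You also correctly identify that the missing power of $\sqrt{n}$ must come from the length-two walk identity $(\rho^{\star})^2x_u=d(u)x_u+\sum_{v\sim u}\sum_{w\sim v,\,w\neq u}x_w$.

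The gap is in the mechanism you propose for closing it. You suggest exploiting the $K_{q+1,\ell-q-1}$-freeness of $G^{\star}$ to get codegree restrictions. That is the engine of the paper's next lemma (Lemma \ref{lemma2.3.}), but it cannot be deployed here without circularity: the codegree argument hinges on the fact that only $O(1)$ vertices of $N_1(u)$ can have $q$ or more neighbors inside $L^{\eta}$, and that count is $\ell\binom{|L^{\eta}|}{q}$, which is only a constant once you already know $|L^{\eta}|=O(1)$. With the preliminary estimate $|L^{\eta}|\leq n^{0.6}$ you would get $\ell\binom{n^{0.6}}{q}=\Theta(n^{0.6q})$, and the resulting error term $|L^{\eta}|\cdot\ell\binom{|L^{\eta}|}{q}=\Theta(n^{0.6(q+1)})$ is not $o(n)$ for any $q\geq 1$, so it cannot be absorbed. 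The paper instead avoids codegrees entirely in this lemma: it splits the double sum according to whether the endpoint $w$ lies in $L^{\eta}$ or not, bounds the off-$L^{\eta}$ contribution by $2e(G^{\star})\cdot(2\ell)^{-\eta}\leq n/(2\ell)^{\eta-1}$ using the small eigenvector entries, and bounds the $L^{\eta}$-contribution by the crude $F$-free density estimate $e\bigl(\overline{L_1^{\eta}},\,L_1^{\eta}\cup L_2^{\eta}\bigr)\leq\ell\bigl(|\overline{L_1^{\eta}}|+|L_1^{\eta}\cup L_2^{\eta}|\bigr)\leq\ell\,d(u)+\ell n^{0.6}$ (applying \eqref{alg004} to the induced subgraph). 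Plugging these in with $\eta=\mu+2$ yields $d(u)\geq n/(2\ell)^{\mu+1}$, a bound linear in $n$, and one more degree-sum comparison gives $|L^{\mu}|\leq(2\ell)^{\mu+2}$. You should replace the codegree step with this local edge-density estimate, which only uses the $F$-freeness of $G^{\star}$ and the already-established $n^{0.6}$ bound.
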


\begin{proof}
By Lemma \ref{lemma2.1.}, we get
$$\frac{\sqrt{q(n-q)}}{(2\ell)^{\eta}}\leq\rho^{\star} x_u=\sum_{v\in N_{G^{\star}}(u)}x_v\le d_{G^{\star}}(u)$$
for each $u\in L^{\eta}$.
Summing this inequality over all vertices in $L^{\eta}$, we obtain
\begin{align*}
 |L^{\eta}|\sqrt{q(n-q)}\cdot\frac{1}{(2\ell)^{\eta}}
\leq \sum_{u\in V(G^{\star})}d_{G^{\star}}(u)
\leq 2{\rm ex}(n,F)\leq  2(\ell-2) n.
\end{align*}
Consequently,
$|L^{\eta}|\leq n^{0.6}$ for sufficiently large $n$.

Given an arbitrary vertex $u\in V(G^{\star})$.
For simplicity,
we use $N_i$, $L_i^{\eta}$ and $\overline{L_i^{\eta}}$ instead of $N_i(u)$, $N_i(u)\cap L^{\eta}$ and $N_i(u)\setminus L^{\eta}$, respectively.
By Lemma \ref{lemma2.1.}, we have
\begin{eqnarray}\label{alg006}
q(n-q)x_{u}\leq (\rho^{\star})^2x_{u}=d_{G^{\star}}(u)x_u+\sum_{v\in N_1}\sum_{w\in N_1(v)\setminus\{u\}}x_w.
\end{eqnarray}

Since $N_1(v)\setminus\{u\}\subseteq N_1\cup N_2$,
we get $(N_1(v)\setminus\{u\})\cap L^{\eta}\subseteq L_1^{\eta}\cup L_2^{\eta}$ and
$(N_1(v)\setminus\{u\})\cap \overline{L^{\eta}}\subseteq \overline{L_1^{\eta}}\cup \overline{L_2^{\eta}}$.
Now we divide $\sum_{v\in N_1}\sum_{w\in N_1(v)\setminus\{u\}}x_w$
into two cases $w\in L_1^{\eta}\cup L_2^{\eta}$ or $w\in \overline{L_1^{\eta}}\cup \overline{L_2^{\eta}}$.
Clearly, $N_1=L_1^{\eta}\cup\overline{L_1^{\eta}}$.
In the case $w\in L_1^{\eta}\cup L_2^{\eta}$,
\begin{eqnarray}\label{alg007}
\sum_{v\in N_1}\sum_{w\in (L_1^{\eta}\cup L_2^{\eta})}\!\!x_w
\leq
\big(2e(L_1^{\eta})+e(L_1^{\eta},L_2^{\eta})\big)+\sum_{v\in\overline{L_1^{\eta}}}\sum_{w\in (L_1^{\eta}\cup L_2^{\eta})}\!\!\!x_w.
\end{eqnarray}
Since $|L^{\eta}|\leq n^{0.6}$,
we have
\begin{eqnarray}\label{alg008}
2e(L_1^{\eta})+e(L_1^{\eta},L_2^{\eta})\leq
2e(L^{\eta})
\leq 2\ell|L^{\eta}|\leq 2\ell n^{0.6}.
\end{eqnarray}

Now we deal with the case $w\in \overline{L_1^{\eta}}\cup \overline{L_2^{\eta}}$.
Recall that $x_w\leq \frac{1}{(2\ell)^{\eta}}$ for $w\in\overline{L_1^{\eta}}\cup\overline{L_2^{\eta}}$.
Then
\begin{eqnarray}\label{alg009}
\sum_{v\in N_1}\sum_{w\in\overline{L_1^{\eta}}\cup\overline{L_2^{\eta}}}\!\!\!x_w \le
\Big(e(L_1^{\eta},\overline{L_1^{\eta}}\cup\overline{L_2^{\eta}})
+2e(\overline{L_1^{\eta}})+e(\overline{L_1^{\eta}},\overline{L_2^{\eta}})\Big)\frac{1}{(2\ell)^{\eta}}
\leq \frac{n}{(2\ell)^{\eta-1}},
\end{eqnarray}
where $e(L_1^{\eta},\overline{L_1^{\eta}}\cup\overline{L_2^{\eta}})
+2e(\overline{L_1^{\eta}})+e(\overline{L_1^{\eta}},\overline{L_2^{\eta}})\le 2e(G^{\star})\leq 2{\rm ex}(n,F)\leq 2\ell n$ by \eqref{alg004}.

Combining (\ref{alg006})-(\ref{alg009}), we obtain
\begin{eqnarray}\label{alg010}
 qn x_{u}&<& q^2x_{u}+d_{G^{\star}}(u)+ 2\ell n^{0.6}+\sum_{v\in\overline{L_1^{\eta}}}\sum_{w\in (L_1^{\eta}\cup L_2^{\eta})}\!\!\!x_w+\frac{n}{(2\ell)^{\eta-1}}\nonumber\\
 &<&d_{G^{\star}}(u)+\sum_{v\in\overline{L_1^{\eta}}}\sum_{w\in (L_1^{\eta}\cup L_2^{\eta})}\!\!\!x_w+\frac{2n}{(2\ell)^{\eta-1}}.
\end{eqnarray}

Now we show that $d_{G^{\star}}(u)\geq\frac{n}{(2\ell)^{\mu+1}}$ for any $u\in L^{\mu}$.
By \eqref{alg004}, we have
\begin{eqnarray}\label{alg011}
e\big(\overline{L_1^{\eta}},L_1^{\eta}\cup L_2^{\eta}\big)
   \leq \ell \big(|\overline{L_1^{\eta}}|+|L_1^{\eta}\cup L_2^{\eta}|\big)\leq \ell d_{G^{\star}}(u)+\frac{n}{(2\ell)^{\eta-1}},
\end{eqnarray}
where the last inequality holds as $|\overline{L_1^{\eta}}|\leq d_{G^{\star}}(u)$, $|L^{\eta}|\leq n^{0.6}$ and $n$ is sufficiently large.
Combining (\ref{alg010}) and (\ref{alg011}), we obtain
$qn x_{u}<(\ell+1)d_{G^{\star}}(u)+\frac{3n}{(2\ell)^{\eta-1}}$.
Clearly, $x_{u}\geq \frac{1}{(2\ell)^{\mu}}$ as $u\in L^{\mu}$.
Combining these with $\eta=\mu+2$ we obtain
\begin{eqnarray*}
\frac{(\ell+4)n}{(2\ell)^{\mu+1}}\leq \frac{q n}{(2\ell)^{\mu}}\leq (\ell+1)d_{G^{\star}}(u)+\frac{3n}{(2\ell)^{\mu+1}},
\end{eqnarray*}
where the first inequality holds as $q\geq 1$ and $\ell\geq 4$.
Consequently, $d_{G^{\star}}(u)\geq \frac{n}{(2\ell)^{\mu+1}}$.
Summing this inequality over all vertices in $L^{\mu}$, we obtain
$$|L^{\mu}|\frac{n}{(2\ell)^{\mu+1}}\leq \sum_{u\in L^\mu}d_{G^{\star}}(u)\leq 2e(G^{\star})\leq 2{\rm ex}(n,F)\leq 2\ell n,$$
which leads to $|L^{\mu}|\leq (2\ell)^{\mu+2}$, completing the proof.
\end{proof}

\begin{lem}\label{lemma2.3.}
For every positive integer $\mu$ and every $u\in L^{\mu}$,
we have $d_{G^{\star}}(u)\geq \left(x_u-\varepsilon\right)n$.
\end{lem}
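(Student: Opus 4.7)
The plan is to build on inequality \eqref{alg010} from the proof of Lemma \ref{lemma2.2.}, namely
$$qnx_u<d_{G^\star}(u)+\sum_{v\in\overline{L_1^\eta}}\sum_{w\in L_1^\eta\cup L_2^\eta}x_w+\frac{2n}{(2\ell)^{\eta-1}},$$
and to sharpen the estimate on the double sum. I will specialize $\eta=\phi$, with $\phi$ the integer fixed in \eqref{alg005}; by the defining inequality $3/(2\ell)^{\phi-1}<q\varepsilon$ the last residual satisfies $2n/(2\ell)^{\phi-1}<qn\varepsilon$, already of an acceptable form.

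The key refinement is to apply the eigen-equation pointwise: for each $v\in\overline{L_1^\phi}$ one has $\rho^\star x_v=\sum_{w\in N(v)}x_w$ together with the definitional smallness $x_v<(2\ell)^{-\phi}$, which gives
$$\sum_{w\in N(v)\cap(L_1^\phi\cup L_2^\phi)}x_w\le\rho^\star x_v<\rho^\star(2\ell)^{-\phi}.$$
Summing over $v\in\overline{L_1^\phi}\subseteq N(u)$ yields an upper bound on the double sum in terms of $d_{G^\star}(u)$, $\rho^\star$ and $(2\ell)^{-\phi}$. To control the complementary contribution coming from the ``heavy'' vertices, I invoke the cardinality cap $|L^\phi|\le(2\ell)^{\phi+2}$ from Lemma \ref{lemma2.2.} (combined with $x_w\le 1$), and the crude upper bound $\rho^\star\le\sqrt{2\ell n}$ from Lemma \ref{lemma2.1.}. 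Substituting all the estimates back into \eqref{alg010}, rearranging, and using the slack provided by $\varepsilon<1/(2\ell)^4$ and $3/(2\ell)^{\phi-1}<\varepsilon/(4\ell)$, should give the desired $d_{G^\star}(u)\ge(x_u-\varepsilon)n$.

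\textbf{Main obstacle.} The chief quantitative hurdle is the mismatch of scales: the natural pointwise eigen-equation estimate for $v\in\overline{L_1^\phi}$ is only of order $\rho^\star(2\ell)^{-\phi}=\Theta(\sqrt n)$ (with $\phi$ a constant depending on $\ell$ and $\varepsilon$ but not on $n$), whereas the conclusion asks for an $n$-scale lower bound on $d_{G^\star}(u)$ with coefficient as sharp as $x_u-\varepsilon$. Bridging this gap requires exploiting the smallness of $x_v$ on $\overline{L_1^\phi}$ and the boundedness of the heavy set $L^\phi$ \emph{in tandem}, not in isolation; a careful split of the double sum into pieces where either the pointwise bound or the cardinality bound is genuinely sharp is needed, and the constants in \eqref{alg005} are tuned precisely so that all resulting error terms can be absorbed into the $\varepsilon n$ slack present in the claim. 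A secondary subtlety is that, to turn the coarse $\ell$-factor in the proof of Lemma \ref{lemma2.2.} into a leading constant essentially equal to $1$, one must track the contribution of $q^2 x_u$ and $2\ell n^{0.6}$ type remainders cleanly rather than absorbing them into $d_{G^\star}(u)$ as was done there.
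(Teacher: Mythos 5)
Your proposal correctly identifies that one should start from inequality \eqref{alg010} and sharpen the bound on $\sum_{v\in\overline{L_1^\eta}}\sum_{w\in L_1^\eta\cup L_2^\eta}x_w$, and you correctly flag that the pointwise eigen-equation bound $\sum_{w\in N(v)\cap(L_1^\phi\cup L_2^\phi)}x_w\le\rho^\star x_v<\rho^\star(2\ell)^{-\phi}$ lives at the wrong scale. But you never resolve that mismatch: summing over $v\in\overline{L_1^\phi}$ gives a bound of order $d_{G^\star}(u)\cdot\rho^\star(2\ell)^{-\phi}=\Theta(n^{3/2})$, which is far too large, and the only alternative route through the eigen-equation, $\sum_{v\in\overline{L_1^\phi}}x_v\le\rho^\star x_u$, feeds back $(\rho^\star)^2 x_u\approx qnx_u$ and is circular. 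The phrase ``a careful split of the double sum into pieces where either the pointwise bound or the cardinality bound is genuinely sharp'' is where the proof would have to happen, and you do not say what that split is; as stated, no version of the eigen-equation estimate can produce the coefficient $q$ in front of $d_{G^\star}(u)$ that the argument needs.

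The idea your proposal is missing is combinatorial, not spectral: one must use the $F$-freeness of $G^\star$ to control $e(\overline{L_1^\eta},L_1^\eta\cup L_2^\eta)$ directly. The paper partitions $\overline{L_1^\eta}$ according to whether a vertex has at least $q$ neighbors in $L_1^\eta\cup L_2^\eta$, and shows via a pigeonhole argument that the set $\overline{L_1^\eta}'$ of such vertices has size at most $\ell\binom{|L_1^\eta\cup L_2^\eta|}{q}$ --- otherwise some $q$ vertices of $L_1^\eta\cup L_2^\eta$ would have $\ell$ common neighbors in $\overline{L_1^\eta}'$, and together with $u$ (which is not in $L_1^\eta\cup L_2^\eta$) this yields $K_{q+1,\ell}\subseteq G^\star$, hence $F\subseteq G^\star$ by \eqref{alg003}, a contradiction. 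Since $|L^\eta|$ is $O(1)$ by Lemma~\ref{lemma2.2.}, the term $|L_1^\eta\cup L_2^\eta||\overline{L_1^\eta}'|$ is $O(1)$, while every remaining vertex of $\overline{L_1^\eta}$ contributes at most $q-1$, giving $e(\overline{L_1^\eta},L_1^\eta\cup L_2^\eta)\le(q-1)d_{G^\star}(u)+n/(2\ell)^{\eta-1}$ (inequality \eqref{alg012}). Plugging this into \eqref{alg010} with $x_w\le 1$ yields $qnx_u\le qd_{G^\star}(u)+3n/(2\ell)^{\eta-1}$, and taking $\eta=\phi$ with \eqref{alg005} gives the claim. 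This forbidden-subgraph step is the load-bearing idea; your proposal, as written, does not reach it.
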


\begin{proof}
Let $\overline{L_1^{\eta}}'$ be the subset of $\overline{L_1^{\eta}}$
in which each vertex has at least $q$ neighbors in $L_1^{\eta}\cup L_2^{\eta}$.
We first claim that $|\overline{L_1^{\eta}}'|\leq \ell\binom{|L_1^{\eta}\cup L_2^{\eta}|}{q}$.
If $|L_1^{\eta}\cup L_2^{\eta}|\leq q-1$, then $\overline{L_1^{\eta}}'$ is empty, as desired.
Now we deal with the case $|L_1^{\eta}\cup L_2^{\eta}|\geq q$.
Suppose to the contrary that $|\overline{L_1^{\eta}}'|> \ell\binom{|L_1^{\eta}\cup L_2^{\eta}|}{q}$.
Since there are only $\binom{|L_1^{\eta}\cup L_2^{\eta}|}{q}$ options for vertices in $\overline{L_1^{\eta}}'$ to choose a set of $q$ neighbors from $L_1^{\eta}\cup L_2^{\eta}$,
we can find a set of $q$ vertices in $L_1^{\eta}\cup L_2^{\eta}$ with at least $\lfloor|\overline{L_1^{\eta}}'|/\binom{|L_1^{\eta}\cup L_2^{\eta}|}{q}\rfloor\geq \ell$ common neighbors in $\overline{L_1^{\eta}}'$.
Moreover, one can observe that $u\notin L_1^{\eta}\cup L_2^{\eta}$
and $\overline{L_1^{\eta}}'\subseteq\overline{L_1^{\eta}}\subseteq N_1(u)$.
Hence, $G^{\star}$ contains a copy of $K_{q+1,\ell}$, and so contains a copy of $F$ by \eqref{alg003}, which gives a contradiction.
The claim holds.
Thus,
\begin{eqnarray}\label{alg012}
e(\overline{L_1^{\eta}},L_1^{\eta}\cup L_2^{\eta})
\leq (q-1)|\overline{L_1^{\eta}}\setminus \overline{L_1^{\eta}}'|\time 2
    +|L_1^{\eta}\cup L_2^{\eta}||\overline{L_1^{\eta}}'|
\leq (q-1)d_{G^{\star}}(u)+\frac{n}{(2\ell)^{\eta-1}},
\end{eqnarray}
where the last inequality holds because both $|L_1^{\eta}\cup L_2^{\eta}|\leq |L^{\eta}|$ and
$|\overline{L_1^{\eta}}'|\leq \ell\binom{|L_1^{\eta}\cup L_2^{\eta}|}{q}$ are constants.
Combining  \eqref{alg010} and \eqref{alg012}, we have
$qn x_{u}\leq q d_{G^{\star}}(u)+\frac{3n}{(2\ell)^{\eta-1}}$.
Setting $\eta=\phi$, by \eqref{alg005} we get $d_{G^{\star}}(u)\geq (x_u-\varepsilon)n$.
\end{proof}

\begin{lem}\label{lemma2.4.}
For every $u\in L^1$, $x_u\geq 1-\varepsilon$
and $|N_1(u)|\geq (1-2\varepsilon)n$.
Moreover, $|L^{1}|=q$.
\end{lem}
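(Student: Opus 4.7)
The lemma bundles three assertions (pointwise lower bound on $x_u$, degree lower bound, cardinality $|L^1|=q$), and I would establish them in the order listed since each uses the previous. At $u=u^{\star}$ the first two are trivial, because $x_{u^{\star}}=1$ and $d_{G^{\star}}(u^{\star})\geq(1-\varepsilon)n$ by Lemma~\ref{lemma2.3.}. For $u\in L^1\setminus\{u^{\star}\}$ I would prove $x_u\geq 1-\varepsilon$ by contradiction. Assuming $x_u<1-\varepsilon$, form $G'$ from $G^{\star}$ by deleting every edge at $u$ and then joining $u$ to $N_{G^{\star}}(u^{\star})\setminus\{u\}$; in $G'$ the vertex $u$ becomes an open twin of $u^{\star}$, so $N_{G'}(u)=N_{G'}(u^{\star})$. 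Using $X$ as a test vector for $G'$ together with the eigenvector identities $\sum_{v\in N_{G^{\star}}(u^{\star})}x_v=\rho^{\star}$ and $\sum_{v\in N_{G^{\star}}(u)}x_v=\rho^{\star} x_u$, a direct computation yields
\[
  X^{\mathrm T}A(G')X-\rho^{\star}\|X\|^2 \;=\; 2x_u\rho^{\star}(1-x_u)-O(x_u^2) \;\geq\; 2x_u\rho^{\star}\varepsilon -2,
\]
which is of order $\sqrt{n}$ since $x_u\geq(2\ell)^{-1}$ and $\rho^{\star}\geq\sqrt{q(n-q)}$ by Lemma~\ref{lemma2.1.}. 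Dividing by $\|X\|^2\leq n$ then gives $\rho(G')>\rho^{\star}$.

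It remains to check that $G'$ is $F$-free. Any copy of $F$ in $G'$ must use $u$, as otherwise it already embeds in $G^{\star}-u\subseteq G^{\star}$. If it uses $u$ but not $u^{\star}$, the inclusion $N_{G'}(u)\subseteq N_{G^{\star}}(u^{\star})$ lets me send $u\mapsto u^{\star}$ to obtain a copy in $G^{\star}$. If it uses both $u$ and $u^{\star}$, their preimages in $F$ are nonadjacent (because $u\not\sim_{G'}u^{\star}$) and hence lie in a common bipartition class of $F$; the open-twin structure of $G'$ combined with the fact that $u^{\star}$ is adjacent to at least $(1-\varepsilon)n$ vertices of $G^{\star}$ allows swapping the two preimages and relocating one of them into $N_{G^{\star}}(u^{\star})$, thereby landing the copy back in $G^{\star}$. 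In each case we contradict $G^{\star}\in{\rm SPEX}(n,F)$, forcing $x_u\geq 1-\varepsilon$.

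With $x_u\geq 1-\varepsilon$ in hand, Lemma~\ref{lemma2.3.} with $\mu=1$ immediately yields $|N_1(u)|=d_{G^{\star}}(u)\geq(x_u-\varepsilon)n\geq(1-2\varepsilon)n$. For $|L^1|\leq q$: if $|L^1|\geq q+1$, pick any $q+1$ vertices in $L^1$, and by the degree bound their common neighborhood has size at least $n-(q+1)\cdot 2\varepsilon n>\ell$ by \eqref{alg005}, yielding a $K_{q+1,\ell}$ and hence a copy of $F$ by \eqref{alg003}, contradicting $F$-freeness. For $|L^1|\geq q$: suppose $|L^1|=t\leq q-1$. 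Summing the eigenvector equation at $u^{\star}$ and splitting its neighborhood along $L^1$, then applying Cauchy--Schwarz to the tail, forces $\sum_{v\notin L^1}x_v^2\geq q(1-o(1))$. Combined with the spectral identity $\rho^{\star}(\|X_{V\setminus L^1}\|^2-\|X_{L^1}\|^2)=2(e_3-e_1)$ obtained by splitting $X^{\mathrm T}AX$ along $L^1$, the bound $e_3\leq e(G^{\star})/(2\ell)^2\leq n/(4\ell)$ from \eqref{alg004}, and a refinement of $e_3$ that uses the $K_{q+1,\ell}$-freeness of $G^{\star}$ (implied by \eqref{alg003}), this contradicts $\rho^{\star}\geq\sqrt{q(n-q)}$.

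\textbf{Main obstacle.} The crux is verifying $F$-freeness of $G'$ when a hypothetical copy of $F$ in $G'$ uses both $u$ and $u^{\star}$, since the swap argument genuinely exploits the bipartite structure of $F$ together with the fact that $u^{\star}$ is adjacent to nearly all vertices of $G^{\star}$. The lower bound on $|L^1|$ is also delicate and requires a careful combination of the spectral identity with the $K_{q+1,\ell}$-freeness of $G^{\star}$.
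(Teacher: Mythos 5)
The graph–modification route you propose for proving $x_u\geq 1-\varepsilon$ has a genuine flaw. After making $u$ an open twin of $u^\star$ in $G'$, you correctly identify that the only nontrivial case is a copy $F'$ of $F$ in $G'$ containing both $u$ and $u^\star$. But the claim that the $F$-preimages $a,b$ of $u,u^\star$, being nonadjacent, ``hence lie in a common bipartition class of $F$'' is false: in a tree, nonadjacency does not force equal parity (e.g.\ the endpoints of a $P_4$ are nonadjacent yet in opposite classes). Moreover, even setting that aside, the ``swap and relocate'' step is not an argument: you would need to exhibit a vertex $w\notin V(F')$ with $N_{F'}(u)\subseteq N_{G^\star}(w)$, and the fact that $u^\star$ has $\geq(1-\varepsilon)n$ neighbours in $G^\star$ gives no control over the common neighbourhood of the up to $\ell-1$ vertices in $N_{F'}(u)$, which may well be low-degree vertices of $R$. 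So $F$-freeness of $G'$ is not established, and the contradiction does not go through. The paper instead avoids any graph modification: it shows $u_0\in L_1^\eta\cup L_2^\eta$ via a common-neighbour count, and then isolates the contribution $e(\overline{L_1^\eta},\{u_0\})x_{u_0}$ inside the inequality \eqref{alg010} applied at $u^\star$, using \eqref{alg012} to squeeze out a contradiction with \eqref{alg013} when $x_{u_0}<1-\varepsilon$.

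The degree consequence and the bound $|L^1|\leq q$ agree with the paper. For the lower bound $|L^1|\geq q$, your sketch is too vague to audit: the claimed Cauchy--Schwarz step forcing $\sum_{v\notin L^1}x_v^2\geq q(1-o(1))$ is not derived, the quantities $e_1,e_3$ and the ``spectral identity'' are undefined, and the asserted ``refinement of $e_3$'' is not spelled out. The paper's argument here is considerably more direct and elementary: assuming $|L^1|\leq q-1$, it plugs $u=u^\star$ into \eqref{alg010}, splits $\sum_{v\in\overline{L_1^\eta}}\sum_{w\in L_1^\eta\cup L_2^\eta}x_w$ according to $w\in L^1$ or not, bounds the first piece by $(q-2)n$ and the second by $\ell n\cdot\frac{1}{2\ell}$, and obtains $qn\leq(q-\tfrac14)n$, an immediate contradiction. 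You should replace your sketch with this explicit computation (or supply the missing details of the identity you invoke).
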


\begin{proof}
We first show the lower bounds of $x_u$ and $|N_1(u)|$ for any $u\in L^1$.
Suppose to the contrary that there exists a vertex $u_0\in L^1$ with $x_{u_0}<1-\varepsilon$.
Since $u_0\in L^1$, we have $x_{u_0}\geq\frac{1}{2\ell }$.
By Lemma \ref{lemma2.3.}, we get
$$|N_1(u^{\star})|\geq\left(1-\varepsilon\right)n~~~\text{and}
~~~|N_1(u_0)|\geq\left(\frac{1}{{2\ell}}-\varepsilon\right)n.$$
For convenience, we set $L_i^{\eta}=N_i(u^{\star})\cap L^{\eta}$
and $\overline{L_i^{\eta}}=N_i(u^{\star})\setminus L^{\eta}$.
By Lemma \ref{lemma2.2.},
$|L^{\eta}|\leq ({2\ell})^{\eta+2}$.
Hence, $|\overline{L_1^{\eta}}|\geq|N_1(u^{\star})|-|L^{\eta}|
\geq\big(1-2\varepsilon\big)n$. Consequently, by \eqref{alg005}
\begin{eqnarray}\label{alg013}
\big|\overline{L_1^{\eta}}\cap N_1(u_0)\big|\geq\big|\overline{L_1^{\eta}}\big|
+\big|N_1(u_0)\big|-n\ge\Big(\frac{1}{{2\ell}}-3\varepsilon\Big)n>\frac{n}{4\ell}.
\end{eqnarray}

From (\ref{alg013}) we can see that $u_0$ has a neighbor in $\overline{L_1^{\eta}}$,
which is also a neighbor of $u^{\star}$.
Thus, $u_0\in N_1(u^{\star})\cup N_2(u^{\star})$.
Note that $u_0\in L^1\subseteq L^{\eta}$. Thus, $u_0\in L_1^{\eta}\cup L_2^{\eta}$.
Now, applying $u=u^{\star}$ to \eqref{alg010} gives
\begin{eqnarray*}
qn
\!&\leq&\! |N_1(u^{\star})|+\frac{2n}{(2\ell)^{\eta-1}}
+e\big(\overline{L_1^{\eta}},(L_1^{\eta}\cup L_2^{\eta})\setminus\{u_0\}\big)
+e\big(\overline{L_1^{\eta}},\{u_0\}\big)x_{u_0}\\
\!&\leq&\! |N_1(u^{\star})|+\frac{2n}{(2\ell)^{\eta-1}}
+e\big(\overline{L_1^{\eta}},(L_1^{\eta}\cup L_2^{\eta})\big)
+e\big(\overline{L_1^{\eta}},\{u_0\}\big)\big(x_{u_0}-1\big),
\end{eqnarray*}
where $x_{u_0}-1<-\varepsilon$
by the previous assumption.
Combining this with (\ref{alg012}) and setting $\eta=\phi$, we have
$$q n
  \leq q|N_1(u^{\star})|+\frac{3n}{(2\ell)^{\phi-1}}
        -\varepsilon e\big(\overline{L_1^{\eta}},\{u_0\}\big),$$
which yields that
$\big|\overline{L_1^{\eta}}\cap N_1(u_0)\big|=e\big(\overline{L_1^{\eta}},\{u_0\}\big)<\frac{n}{4\ell}$ by \eqref{alg005},
contradicting (\ref{alg013}).
Therefore, $x_u\geq 1-\varepsilon$ for each $u\in L^1$.
Furthermore,
it follows from Lemma \ref{lemma2.3.} that for each $u\in L^1$,
$|N_1(u)|\geq (1-2\varepsilon)n.$

Finally, we prove that $|L^1|=q$.
We first suppose $|L^{1}|\geq q+1$.
Note that every vertex $u\in L^{1}$
has at most $2\varepsilon n$ non-neighbors.
It follows that any $q$ vertices in $L^{1}$ have at least $n-2q \varepsilon n\geq \frac{n}{2}$ common neighbors by \eqref{alg005}.
Hence, $G^{\star}$ contains a copy of $K_{q+1,\ell}$, and so contains a copy of $F$ by \eqref{alg003}, which gives a contradiction.
Hence, $|L^1|\leq q$.

Next, suppose that $|L^1|\leq q-1$.
Since $u^{\star}\in L^{1}\setminus (L_1^{\eta}\cup L_2^{\eta})$,
we have $|(L_1^{\eta}\cup L_2^{\eta})\cap L^{1}|\leq q-2$.
We can further obtain that
    $$e\big(\overline{L_1^{\eta}},(L_1^{\eta}\cup L_2^{\eta})\cap L^1\big)\leq|\overline{L_1^{\eta}}|\cdot|(L_1^{\eta}\cup L_2^{\eta})\cap L^1| \leq (q-2)n.$$
By \eqref{alg004}, we have
$e\big(\overline{L_1^{\eta}},(L_1^{\eta}\cup L_2^{\eta})\setminus L^{\eta}\big)\leq e(G^{\star})<\ell n.$
Furthermore, by the definition of $L^1$, we know that
$x_w<\frac{1}{{2\ell}}$
for each $w\in(L_1^{\eta}\cup L_2^{\eta})\setminus L^1.$
Applying $u=u^{\star}$ and $\eta\geq 2$ to \eqref{alg010} gives
\begin{eqnarray*}
qn
\!&\leq&\!d_{G^{\star}}(u^\star)+\sum_{v\in\overline{L_1^{\eta}}}\sum_{w\in (L_1^{\eta}\cup L_2^{\eta})}x_w+\frac{2n}{(2\ell)^{\eta-1}}\\
\!&\leq&\!
 \Big(|N_1(u^{\star})|\!+\frac{2n}{(2\ell)^{\eta-1}}+\!
e\big(\overline{L_1^\eta},(L_1^\eta\cup L_2^\eta)\cap L^1\big)\Big)
\!+\!e\Big(\overline{L_1^\eta},(L_1^\eta\cup L_2^\eta)\setminus L^{1}\Big)\frac{1}{{2\ell}}\\
\!&\leq&\! \Big(n+\frac{2n}{2\ell}\Big)+(q-2)n+\ell n\cdot\frac{1}{2\ell}\\
\!&\leq&\! \left(q-\frac14\right)n ~~~~~~~~~~~~~(\text{as}~\ell\geq 4),
\end{eqnarray*}
which gives a contradiction.
Therefore, $|L^1|=q$.
\end{proof}

For convenience,
we use $L$, $L_i$ and $\overline{L_i}$ instead of $L^{1}$, $N_i(u)\cap L^{1}$ and $N_i(u)\setminus L^{1}$,
respectively.
Now, let $R_1$ be the subset of $V(G^{\star})\setminus L$
in which every vertex is a non-neighbor of some vertex in $L$
and $R=V(G^{\star})\setminus (L\cup R_1)$.
Thus, $|R_1|\leq 2\varepsilon n|L| \leq \frac{n}{(2\ell)^3}$ by \eqref{alg005},
and so $|R|=n-|L|-|R_1|\geq \frac{n}{2}$.
Now, we prove that the eigenvector entries of vertices in $R\cup R_1$ are small.

\begin{lem}\label{lemma2.5.}
Let $u\in R\cup R_1$. Then $x_u\le \frac{1}{2\ell^2}$.
\end{lem}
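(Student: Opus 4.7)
The plan is to argue by contradiction: suppose some $u \in R \cup R_1$ satisfies $x_u > \frac{1}{2\ell^2}$. Since $\frac{1}{2\ell^2} > \frac{1}{(2\ell)^2}$, we have $u \in L^2$, and Lemma~\ref{lemma2.3.} applied with $\mu = 2$ gives
\[
d_{G^{\star}}(u) \geq (x_u - \varepsilon)n > \left(\frac{1}{2\ell^2} - \varepsilon\right) n.
\]

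The core step is to produce a copy of $K_{q+1,\ell}$ in $G^{\star}$, which by \eqref{alg003} would contain $F$, contradicting $F$-freeness of $G^{\star}$. I would take $L \cup \{u\}$ (of size $q+1$) as the prospective smaller side and estimate the common neighborhood of $L \cup \{u\}$ in $V(G^{\star}) \setminus (L \cup \{u\})$. By Lemma~\ref{lemma2.4.} each $w \in L$ has at most $2\varepsilon n$ non-neighbors, so at most $2q\varepsilon n$ vertices of $V(G^{\star})$ fail to be adjacent to some vertex of $L$, while $u$ itself has at most $n - 1 - d_{G^{\star}}(u)$ non-neighbors. A union bound then produces a common neighborhood of size at least $d_{G^{\star}}(u) - q - 2q\varepsilon n$. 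Since $K_{q+1,\ell}$ is bipartite, no edges are needed inside $L \cup \{u\}$, so the argument applies uniformly to both $u \in R$ and $u \in R_1$.

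If this common neighborhood has $\geq \ell$ vertices, we find $K_{q+1,\ell} \subseteq G^{\star}$, a contradiction. Hence $d_{G^{\star}}(u) < \ell + q + 2q\varepsilon n$, and combining with the lower bound from the first paragraph gives
\[
\left(\frac{1}{2\ell^2} - \varepsilon\right) n < \ell + q + 2q\varepsilon n.
\]
For $n$ sufficiently large this forces $\frac{1}{2\ell^2} \leq (2q+1)\varepsilon$, which contradicts the calibration $\varepsilon < (2\ell)^{-4}$ in \eqref{alg005}: indeed $q \leq \ell/2$ implies $(2q+1)\varepsilon \leq (\ell+1)(2\ell)^{-4} < \frac{1}{2\ell^2}$. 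The main obstacle is only conceptual rather than computational---all prior structural work (Lemmas~\ref{lemma2.2.}--\ref{lemma2.4.}) has reduced the problem to a single test: a vertex outside $L$ with non-negligible eigenvector weight must, via its forced large degree, combine with $L$ to realize the universal forbidden structure $K_{q+1,\ell}$, and the quantitative hypothesis $x_u > \frac{1}{2\ell^2}$ is precisely the threshold at which this forcing becomes effective.
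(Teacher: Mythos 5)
Your proof is correct, but it takes a genuinely different route from the paper's. The paper proceeds directly through the eigenvalue equation: it first shows $d_R(u)\leq\ell-1$ for every $u\in R\cup R_1$ (a short $K_{q+1,\ell}$-forcing argument), then bounds $\sum_{u\in R_1}x_u$ using $|R_1|\leq n/(2\ell)^3$ and $e(R_1)\leq\ell|R_1|$, and finally estimates $\rho^\star x_u=\sum_{v\sim u}x_v$ by splitting into contributions from $L$, $R$, and $R_1$, obtaining $\rho^\star x_u\leq\tfrac32\ell+\tfrac{7n}{16\ell^2\rho^\star}$ and dividing by $\rho^\star\geq\sqrt{n-1}$. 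Your argument instead works by contradiction and ``degree squeeze'': if $x_u>\tfrac{1}{2\ell^2}$ then $u\in L^2$, so Lemma~\ref{lemma2.3.} forces $d_{G^\star}(u)\geq(x_u-\varepsilon)n$, while the $K_{q+1,\ell}$-forcing argument (applied to $L\cup\{u\}$ with a union bound over the non-neighborhoods furnished by Lemma~\ref{lemma2.4.}) simultaneously caps $d_{G^\star}(u)<\ell+q+2q\varepsilon n$; the two bounds are incompatible for $\varepsilon<(2\ell)^{-4}$ and $q\leq\ell/2$. Your approach is more modular---it reuses Lemma~\ref{lemma2.3.} as a black box and avoids any further manipulation of the eigenvalue equation or any estimate on $\sum_{u\in R_1}x_u$---whereas the paper's approach is more computational and gives a self-contained derivation that exposes exactly why the constant $\tfrac{1}{2\ell^2}$ arises. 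Both are valid; one small phrasing point: when you say ``at most $2q\varepsilon n$ vertices fail to be adjacent to some vertex of $L$,'' it would be clearer to say ``fail to be adjacent to \emph{every} vertex of $L$'' so the union bound over $w\in L$ is unambiguous.
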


\begin{proof}
For any vertex $u\in R\cup R_1$, we can see that
\begin{eqnarray}\label{alg014}
d_{R}(u)\leq \ell-1.
\end{eqnarray}
Indeed, if $d_{R}(u)\geq \ell$, then $G^{\star}[N_G(u)\cup \{u\}\cup L]$ contains a copy of $K_{q+1,\ell}$,
and so contains a copy of $F$ by \eqref{alg003}, a contradiction.
By Lemma \ref{lemma2.4.} and \eqref{alg002}, $|L|=q\leq (\ell-2)/{2}$.
Then, $$d_{G^{\star}}(u)=d_L(u)+d_{R}(u)+d_{R_1}(u)\leq \frac32\ell+d_{R_1}(u).$$
Note that $|R_1|\leq \frac{n}{(2\ell )^3}$ and $e(R_1)\leq \ell|R_1|$ by \eqref{alg004}.
Thus,
  $$\rho^{\star}\sum_{u\in R_1}x_u\le \sum_{u\in R_1}d_{G^{\star}}(u)
  \leq \sum_{u\in R_1}\left(\frac{3}{2}\ell+d_{R_1}(u)\right)
  \leq \frac32\ell|R_1|+2e(R_1)\leq \frac72\ell|R_1|
  \leq \frac{7n}{16\ell^2},$$
which yields $\sum_{u\in R_1}x_u\leq \frac{7n}{16\ell^2\rho^{\star}}$.
Combining $|L|\leq (\ell-2)/{2}$ and \eqref{alg014}, we obtain
        $$\rho^{\star} x_u=\sum_{v\in N_{G^{\star}}(u)}x_v\leq  \sum_{v\in N_L(u)}x_v+\sum_{v\in N_{R}(u)}x_v+\sum_{v\in  N_{R_1}(u)}x_v\leq \frac32\ell+\frac{7n}{16\ell^2\rho^{\star}}.$$
Note that $\rho^{\star}\geq \sqrt{q(n-q)}\geq \sqrt{n-1}$.
Dividing both sides by $\rho^{\star}$, we get
\begin{eqnarray*}
x_u \!\leq\! \frac{3\ell}{2\rho^{\star}}+\frac{7n}{16\ell^2(\rho^{\star})^2}
    \!\leq\! \frac{3\ell}{2\sqrt{n-1}}+\frac{7n}{16\ell^2(n-1)}
    \leq \frac{1}{2\ell^2},
\end{eqnarray*}
where the last inequality holds as $n$ is sufficiently large, as desired.
\end{proof}

Now we complete the proof of Theorem \ref{theorem1.1}.
\begin{proof}[\textbf{Proof of Theorem~\ref{theorem1.1}}]
From \eqref{alg004} we know that $e(R_1)\leq \ell|R_1|$.
Then there exists a vertex $v_1\in R_1$ with
$d_{R_1}(v_1)\leq\frac{2e(R_1)}{|R_1|}\leq2\ell$.
We modify the graph $G^{\star}$ by  deleting all edges incident to $v_1$ and joining $v_1$ to all vertices in $L$ to obtain the graph $G^{\star\star}$.
We first claim that $G^{\star\star}$ is $F$-free.
Suppose to the contrary, then $G^{\star\star}$ contains a subgraph $F'$ isomorphic to $F$.
From the modification, we can see that $v_1\in V(F')$.
Since $|R|\geq \frac{n}{2}$, we have $|R\setminus V(F')|\geq |R|-\ell>\ell.$
Then there exists a vertex  $w_1\in R\setminus V(F')$.
Clearly, $N_{G^{\star\star}}(v_1)=L\subseteq N_{G^{\star\star}}(w_1)$.
This indicates that a copy of $F$ is already present in $G^{\star}$, which gives a contradiction.
The claim holds.

Now we claim that $\rho(G^{\star\star})>\rho^{\star}$.
By (\ref{alg014}) and Lemma \ref{lemma2.5.}, we have
\begin{eqnarray}\label{alg015}
    \sum_{w\in N_{L\cup R\cup R_1}(v_1)}x_w\leq (q-1)+\sum_{w\in N_R(v_1)}x_w+\sum_{w\in N_{R_1}(v_1)}x_w
    \leq (q-1)+3\ell\cdot\frac{1}{2\ell^2},
\end{eqnarray}
By Lemma \ref{lemma2.4.}, $\sum_{w\in L}x_w\geq q(1-\varepsilon)$.
Combining this with (\ref{alg015}) and \eqref{alg005}, we have
$$\rho(G^{\star\star})-\rho^{\star}\geq \frac{2}{X^\mathrm{T}X}x_{v_1}\left(\sum_{w\in L}x_w-\sum_{w\in N_{L\cup R\cup R_1}(v_1)}x_w\right)\geq 0.
$$
If $\rho(G^{\star\star})=\rho^{\star}$,
then $x_{v_1}=0$ and  $X$ is also a non-negative eigenvector of $G^{\star\star}$ corresponding to $\rho^{\star}$.
This implies that $\rho(G^{\star\star}) x_{v_{1}}= \sum_{w\in L}x_w\geq q(1-\varepsilon)$, and so $x_{v_1}>0$, a contradiction.
Thus, $\rho(G^{\star\star})>\rho^{\star}$, contradicting that $G^{\star}$ is extremal to ${\rm spex}(n,F)$.
Therefore, $R_1$ is empty, and thus $G^{\star}$ contains a spanning subgraph $K_{q,n-q}$,
completing the proof.
\end{proof}

\section{Proofs of Theorems \ref{cor4.1}-\ref{theorem1.5}}\label{section3}

In this section, we first record several technique lemmas that we will use.

\begin{lem}\label{lemma3.1.} \emph{(\cite{TAIT2019})}
Let $H_1$ be a graph on $n_0$ vertices with maximum degree $d$ and $H_2$ be a graph on $n-n_0$ vertices with maximum degree $d'$. $H_1$ and $H_2$ may have loops or multiple edges, where loops add 1 to the degree.
Let $H=H_1\nabla H_2$. Define
$$
J^{\star}=\begin{bmatrix}
      d & n-n_0 \\  
    n_0 & d' \\  
\end{bmatrix}.
$$
Then $\rho(H)\leq \rho(J^{\star})$.
\end{lem}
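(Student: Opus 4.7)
The plan is to exploit the natural bipartition $V(H)=V(H_1)\cup V(H_2)$ and view $J^{\star}$ as an upper bound on a quotient matrix of row sums. In block form,
$$
A(H)=\begin{bmatrix} A(H_1) & J_{n_0,\,n-n_0} \\ J_{n-n_0,\,n_0} & A(H_2) \end{bmatrix},
$$
where the off-diagonal blocks are all-ones because of the join, while the row sums of the diagonal blocks are at most $d$ and $d'$ respectively; the convention that each loop contributes $1$ to the degree ensures this bound survives in the presence of loops or multi-edges.

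First, I would take a Perron eigenvector $X=(x_v)_{v\in V(H)}$ of $H$ corresponding to $\rho(H)$. Since $H=H_1\nabla H_2$ is connected, $X$ may be chosen strictly positive. Let $s_1=\max_{u\in V(H_1)} x_u$ and $s_2=\max_{w\in V(H_2)} x_w$, attained at some vertices $u_1\in V(H_1)$ and $w_1\in V(H_2)$. Evaluating the eigenvalue equation at $u_1$, every neighbor of $u_1$ inside $V(H_1)$ contributes at most $s_1$ (and there are at most $d$ such contributions, counting multiplicity), while the $n-n_0$ neighbors of $u_1$ in $V(H_2)$ each contribute at most $s_2$; hence
$$
\rho(H)\, s_1 \le d\, s_1 + (n-n_0)\, s_2.
$$
A symmetric computation at $w_1$ yields $\rho(H)\, s_2 \le n_0\, s_1 + d'\, s_2$.

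These two inequalities say precisely that $J^{\star}\mathbf{s}\ge\rho(H)\mathbf{s}$ componentwise, where $\mathbf{s}=(s_1,s_2)^{\mathrm T}>0$. I would then invoke the Collatz--Wielandt principle: letting $\mathbf{z}>0$ be the left Perron eigenvector of the nonnegative matrix $J^{\star}$, multiplying on the left gives $\rho(J^{\star})\,\mathbf{z}^{\mathrm T}\mathbf{s}\ge \rho(H)\,\mathbf{z}^{\mathrm T}\mathbf{s}$, so $\rho(H)\le \rho(J^{\star})$. The only point requiring real care is the bookkeeping in the eigenvalue equation when loops or multi-edges are present; once the degree convention is fixed, the remainder is standard nonnegative-matrix theory, and I do not anticipate any substantive obstacle.
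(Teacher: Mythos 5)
The paper cites this lemma from \cite{TAIT2019} without reproducing a proof, so there is no in-paper argument to compare against; your proof is correct and is essentially the one Tait gives. You take the Perron vector of $H$, reduce to the blockwise maxima $\mathbf{s}=(s_1,s_2)^{\mathrm T}>0$, read off $J^{\star}\mathbf{s}\ge\rho(H)\mathbf{s}$ entrywise from the eigenvalue equation at the two maximizing vertices, and close via the left Perron vector of the irreducible nonnegative matrix $J^{\star}$ — exactly the intended argument, with the degree convention for loops correctly ensuring the row sums of $A(H_1)$ and $A(H_2)$ are bounded by $d$ and $d'$.
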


The well--known K\"{o}nig--Egerv\'{a}ry Theorem is as follows.
\begin{lem}\emph{(\cite{Konig})}\label{lemma3.2.}
For any bipartite graph $G$, we have $\beta(G)=\nu(G)$.
\end{lem}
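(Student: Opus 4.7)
The plan is to prove both inequalities $\nu(G)\le \beta(G)$ and $\beta(G)\le \nu(G)$. The first is immediate from the definitions: any vertex cover must contain at least one endpoint of each edge of a maximum matching $M$, and since the matching edges are pairwise disjoint, $|S|\ge |M|=\nu(G)$ holds for every cover $S$. The real content is the reverse inequality, which I would establish via the classical alternating-path construction attributed to K\"{o}nig.

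Fix a maximum matching $M$ of the bipartite graph $G$ with parts $A$ and $B$. Let $U\subseteq A$ denote the set of $M$-unsaturated vertices in $A$, and let $Z$ denote the set of all vertices reachable from $U$ by $M$-alternating paths (paths whose edges alternate between $E(G)\setminus M$ and $M$). Set $C=(A\setminus Z)\cup (B\cap Z)$. The first step is to verify that $C$ is a vertex cover. Suppose toward a contradiction that some edge $ab$ with $a\in A$, $b\in B$ satisfies $a\in Z$ and $b\notin Z$. If $ab\in M$, then the alternating path witnessing $a\in Z$ can be extended by $ab$, forcing $b\in Z$. If $ab\notin M$ and $b$ is $M$-unsaturated, then concatenating gives an $M$-augmenting path from $U$ to $b$, contradicting the maximality of $M$. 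Otherwise $b$ is matched to some $a'\in A$, and the alternating path extends through $b$ and the matching edge $ba'$, again placing $b\in Z$. In every case we reach a contradiction, so $C$ covers every edge of $G$.

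The second step is to show $|C|=|M|$. One observes that every vertex of $A\setminus Z$ is $M$-saturated (otherwise it would lie in $U\subseteq Z$), and every vertex of $B\cap Z$ is $M$-saturated (otherwise an augmenting path from $U$ would exist). The alternating-path structure then shows that $M$ pairs each vertex of $A\setminus Z$ with a unique vertex of $B\setminus Z$, and each vertex of $B\cap Z$ with a unique vertex of $A\cap Z$. Counting over the edges of $M$, exactly one endpoint lies in $C$ and the other in $V(G)\setminus C$, yielding $|C|=|M|$. Combined with the first step, this gives $\beta(G)\le |C|=|M|=\nu(G)$, completing the proof.

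The main obstacle — and it is a purely combinatorial bookkeeping one — is to verify that the pairing in the second step is exact: no matching edge has both endpoints inside $C$, and none has both endpoints outside $C$. Both follow directly from the alternating-path definition of $Z$, since a matching edge with endpoints $a\in A\cap Z$ and $b\in B\setminus Z$ would immediately extend an alternating path to $b$, and a matching edge with endpoints $a\in A\setminus Z$ and $b\in B\cap Z$ would extend an alternating path backward to $a$. Once these two exclusions are in hand, the count is tight and the theorem follows.
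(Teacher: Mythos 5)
The paper does not prove this lemma: it is the classical K\"{o}nig--Egerv\'{a}ry theorem, cited directly from K\"{o}nig's 1931 paper and used as a black box. Your argument is a correct rendering of the standard alternating-path proof, so there is nothing to compare against in the paper itself. A minor polish: when you say that if $ab\in M$ and $a\in Z$ then ``the alternating path witnessing $a\in Z$ can be extended by $ab$,'' note that when $a\notin U$ the path already reaches $a$ along its unique matching edge, which must be $ab$, so $b$ is already on that path; extension is only needed in the (vacuous, since $a$ unsaturated implies $ab\notin M$) case $a\in U$. The rest — the cover verification via the three cases, the saturation of $A\setminus Z$ and $B\cap Z$, and the tight pairing argument showing each matching edge meets $C$ in exactly one endpoint — is sound and complete.
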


By the proof of Theorem \ref{theorem1.1}, we can see that $G^{\star}=G^{\star}[L] \nabla G^{\star}[R]$.
We then give three lemmas to characterize $G^{\star}[L]$ and $G^{\star}[R]$,
which help us to present an approach to prove the remaining theorems.

\begin{lem}\label{lem3.3}
Let $n$ be sufficiently large and $H$ be a graph of order $q$.
Then $H\nabla (n-q)K_1$ is $F$-free if and only if $H$ is $\mathcal{A}$-free.
Furthermore, if $G^{\star}[L]\cong K_q$, then $\beta(F)\geq q+1$.
\end{lem}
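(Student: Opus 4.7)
The plan is to prove the two assertions in sequence. For the equivalence, the central observation is that in any copy of $F$ inside $H\nabla(n-q)K_1$, the preimage of $V(H)$ under the embedding is forced to be a covering of $F$ of size at most $q$, because $(n-q)K_1$ is independent and $|V(H)|=q$. This reduces the statement to a purely combinatorial matching of coverings of $F$ with subgraphs of $H$.

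For the ``only if'' direction I would start from an embedding $\phi\colon V(F)\hookrightarrow V(H\nabla(n-q)K_1)$ and set $S=\phi^{-1}(V(H))$. The remark above shows $S$ is a covering of $F$ with $|S|\leq q$, so $\phi|_S$ realises $F[S]$ as a subgraph of $H$. If $\beta(F)\leq q$, then $F[S]\in\mathcal{A}$ and $H$ contains a member of $\mathcal{A}$; if $\beta(F)=q+1$, no covering of $F$ has size $\leq q$, so the embedding cannot exist, meaning $H\nabla(n-q)K_1$ is automatically $F$-free while simultaneously $H$ is vacuously $\mathcal{A}$-free (since $|V(H)|=q$ cannot host $K_{q+1}$). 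Conversely, given $F[S]\subseteq H$ for some covering $S$ of $F$ with $|S|\leq q$, I would extend an embedding of $F[S]$ into $H$ by injecting the remaining $\ell-|S|$ vertices $V(F)\setminus S$ (an independent set, by the covering property) into distinct vertices of $(n-q)K_1$, which is possible for $n$ sufficiently large. Every edge of $F$ has at least one endpoint in $S$, so it is realised either inside $H$ or across the join, giving a copy of $F$ in $H\nabla(n-q)K_1$.

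For the ``Furthermore'' clause, I would argue the contrapositive. Suppose $\beta(F)\leq q$; then $\mathcal{A}$ contains some $F[S]$ with $|S|\leq q$, and since $K_q$ is complete it contains $F[S]$ as a subgraph, so $K_q$ is not $\mathcal{A}$-free. Applying the already-proved equivalence with $H=K_q$ yields $F\subseteq K_q\nabla(n-q)K_1$. Now Theorem~\ref{theorem1.1} together with the decomposition $G^{\star}=G^{\star}[L]\nabla G^{\star}[R]$ derived in the proof of Theorem~\ref{theorem1.1} (with $|L|=q$ by Lemma~\ref{lemma2.4.}) gives, under the hypothesis $G^{\star}[L]\cong K_q$, the containment $G^{\star}\supseteq K_q\nabla (n-q)K_1\supseteq F$, contradicting that $G^{\star}$ is $F$-free.

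I do not foresee any genuine obstacle; the main bookkeeping is cleanly separating the two subcases in the definition of $\mathcal{A}$ (the distinguished case $\beta(F)=q+1$ versus $\beta(F)\leq q$), both of which are handled uniformly by the ``preimage-is-a-covering'' observation. The lemma essentially repackages the combinatorial fact that embeddings of $F$ into a graph of the form $H\nabla(n-q)K_1$ are controlled by vertex covers of $F$, and then combines this repackaging with the spanning $K_{q,n-q}$ guaranteed by Theorem~\ref{theorem1.1}.
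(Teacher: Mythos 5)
Your proof takes essentially the same approach as the paper: the key observation in both is that any embedding $\phi$ of $F$ into $H\nabla(n-q)K_1$ forces $S=\phi^{-1}(V(H))$ to be a vertex cover of $F$ of size at most $q$ with $F[S]\subseteq H$, and conversely any such cover yields an embedding; the ``Furthermore'' clause then follows by applying the equivalence to $H=K_q$. One cosmetic remark: you label as the ``only if'' direction what is really the contrapositive of the ``if'' direction (and vice versa), but both implications are present, and your explicit treatment of the degenerate case $\beta(F)=q+1$ is in fact slightly more careful than the paper's.
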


\begin{proof}
Suppose first that $H$ is $\mathcal{A}$-free.
Then we show that $H\nabla (n-q)K_1$ is $F$-free.
Otherwise, embed $F$ into $H\nabla (n-q)K_1$,
where $S=V(F)\cap V(H)$.
Then $F[S]\subseteq H[S]$ and $S$ is a covering set of $F$.
By the definition of $\mathcal{A}$, $F[S]\in \mathcal{A}$, which contradicts that $H$ is $\mathcal{A}$-free.
Hence, $H\nabla (n-q)K_1$ is $F$-free.
Suppose then that $H$ is not $\mathcal{A}$-free.
By the definition of $\mathcal{A}$, there exists a covering set $S$ of $F$ such that $|S|\leq q$ and $F[S]\subseteq H$.
We can further find that $H\nabla (n-q)K_1$ contains a copy of $F$.
Therefore, $H\nabla (n-q)K_1$ is $F$-free if and only if $H$ is $\mathcal{A}$-free.

By Theorem \ref{theorem1.1}, $G^{\star}[L]\nabla (n-q)K_1\subseteq G^{\star}$.
Since $G^{\star}$ is $F$-free, so does $G^{\star}[L]\nabla (n-q)K_1$.
Thus, $G^{\star}[L]$ is $\mathcal{A}$-free.
Assume that $G^{\star}[L]\cong K_q$.
Now we prove that $\beta(F)\geq q+1$. If not, then there exists a covering set $S$ of $F$ with $|S|=\beta(F)\leq q$.
Clearly, $F[S]\subseteq K_q$ and $F[S]\in \mathcal{A}$, which contradicts that $G^{\star}[L]$ is $\mathcal{A}$-free as $G^{\star}[L]\cong K_q$.
Hence, $\beta(F)\geq q+1$.
This completes the proof.
\end{proof}

Given a non-nagative integer $p\leq {b}/{2}$,
let $K_{a,b}^{p}$ be the graph obtained from $aK_1\nabla bK_1$ by embedding $p$ independent edges into the partite set of size $b$.

\begin{lem}\label{lem3.4}
Let $n$ be sufficiently large and $\delta=1$. Then $e(G^{\star}[R])=0$ and $G^{\star}[L]\in {\rm EX}(q,\mathcal{A})$.
\end{lem}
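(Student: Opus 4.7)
My plan is to derive both conclusions from the structural output of Theorem~\ref{theorem1.1}, which gives $G^{\star}=G^{\star}[L]\nabla G^{\star}[R]$ with $|L|=q$, combined with the $\mathcal{A}$-freeness of $G^{\star}[L]$ supplied by Lemma~\ref{lem3.3}. I would first forbid edges inside $R$ by a direct embedding that exploits $\delta=1$, and then pin $G^{\star}[L]$ down inside ${\rm EX}(q,\mathcal{A})$ by a Rayleigh-quotient swap.

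For the first claim, suppose toward contradiction that some edge $xy$ lies in $E(G^{\star}[R])$. The hypothesis $\delta=1$ furnishes a vertex $v\in A$ of degree $1$ in $F$; let $u\in B$ denote its unique neighbor. Since $|A\setminus\{v\}|=q=|L|$ and $|R\setminus\{x,y\}|\geq n/2-2\gg\ell$, I will build an injection $\phi:V(F)\to V(G^{\star})$ by sending $v\mapsto x$, $u\mapsto y$, bijecting $A\setminus\{v\}$ onto $L$, and injecting $B\setminus\{u\}$ into $R\setminus\{x,y\}$. Every edge of $F$ runs between $A$ and $B$: the edge $vu$ goes to $xy\in E(G^{\star})$, and any other edge has one endpoint in $L$ and the other in $R$, so it is preserved by the join $L\nabla R$. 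The resulting copy of $F$ in $G^{\star}$ contradicts $F$-freeness.

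Having $e(G^{\star}[R])=0$, we may write $G^{\star}=G^{\star}[L]\nabla(n-q)K_1$. Assume for contradiction that $G^{\star}[L]\notin{\rm EX}(q,\mathcal{A})$ and pick $H^{\star}\in{\rm EX}(q,\mathcal{A})$ with $e(H^{\star})\geq e(G^{\star}[L])+1$. Identify $V(H^{\star})$ with $L$, so that $\tilde G:=H^{\star}\nabla(n-q)K_1$ and $G^{\star}$ differ only on the pairs inside $L$. By Lemma~\ref{lem3.3}, $\tilde G$ is $F$-free. Let $X$ be the Perron vector of $G^{\star}$. The Rayleigh inequality gives
\[
\rho(\tilde G)-\rho^{\star}\geq\frac{X^{\mathrm{T}}(A(\tilde G)-A(G^{\star}))X}{X^{\mathrm{T}}X}=\frac{2}{X^{\mathrm{T}}X}\!\left(\sum_{ab\in E(H^{\star})}\!\!x_ax_b-\!\!\sum_{ab\in E(G^{\star}[L])}\!\!\!x_ax_b\right).
\]
Lemma~\ref{lemma2.4.} supplies $1-\varepsilon\leq x_a\leq 1$ for every $a\in L$, and $e(G^{\star}[L])\leq\binom{q}{2}$ is a constant depending only on $F$. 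Hence the bracket is at least $(1-\varepsilon)^2-2\varepsilon\binom{q}{2}$, which is strictly positive under the choice of $\varepsilon$ dictated by \eqref{alg005}. This contradicts the extremality of $G^{\star}$, forcing $G^{\star}[L]\in{\rm EX}(q,\mathcal{A})$.

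The main obstacle is the final estimate: one must confirm that the small eigenvector fluctuation on $L$ cannot absorb the edge-count gain $e(H^{\star})-e(G^{\star}[L])\geq 1$. This is precisely where Lemma~\ref{lemma2.4.} becomes indispensable, and why it is important that $e(G^{\star}[L])$ is bounded by the constant $\binom{q}{2}$ rather than growing with $n$; once these two ingredients are in place, the perturbation is routine.
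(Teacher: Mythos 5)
Your proposal is correct and follows essentially the same route as the paper: the first claim is killed by an explicit $\delta=1$ embedding sending a degree-one vertex of $A$ and its neighbor onto an edge of $G^{\star}[R]$ and the rest of $A$ onto $L$ (the paper packages the same idea as $F\subseteq K_{q,\ell-q}^{1}$), and the second claim uses the identical Rayleigh-quotient comparison between $G^{\star}$ and $Q_{\mathcal{A}}\nabla(n-q)K_1$, justified by Lemma~\ref{lem3.3}, Lemma~\ref{lemma2.4.}, and the smallness of $\varepsilon$ from \eqref{alg005}.
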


\begin{proof}
Since $\delta=1$, there exists a vertex $v\in A$ of degree 1 in $F$.
Let $A'=A\setminus \{v\}$ and $B'=B\cup \{v\}$.
Obviously, $|A'|=q$ and $F[B']$ consists of an edge and some isolated vertices,
which implies that $F\subseteq K_{q,\ell-q}^{1}$.
If $e(G^{\star}[R])\geq 1$, then $G^{\star}$ must contain a copy of $K_{q,n-q}^{1}$, and so contains a copy of $F$, a contradiction. Thus, $e(G^{\star}[R])=0$.

By Lemma \ref{lem3.3}, $G^{\star}[L]$ is $\mathcal{A}$-free,
which implies that $e(G^{\star}[L])\leq {\rm ex}(q,\mathcal{A})$.
Now we prove that $e(G^{\star}[L])={\rm ex}(q,\mathcal{A})$.
Suppose to the contrary, then $e(G^{\star}[L])<e(Q_{\mathcal{A}})$,
 where $Q_{\mathcal{A}}\in {\rm EX}(q,\mathcal{A})$.
Clearly, $e(Q_{\mathcal{A}})\leq e(K_q)= \binom{q}{2}$. By Lemma \ref{lemma2.4.} and \eqref{alg005}, we have
\begin{align*}
\sum_{uv\in E(Q_{\mathcal{A}})}x_{u}x_{v}-\sum_{uv\in E(G^{\star}[L])}x_{u}x_{v}
  &\geq e(Q_{\mathcal{A}})(1-\varepsilon)^2-e(G^{\star}[L])\\
  &>e(Q_{\mathcal{A}})-2\varepsilon e(Q_{\mathcal{A}})-e(G^{\star}[L])\\
  &\geq 1-2\varepsilon \binom{q}{2}\\
  &>0.
\end{align*}
Consequently,
\begin{align*}
    \rho(Q_{\mathcal{A}}\nabla (n-q)K_1)-\rho(G^{\star})
    &\geq  \frac{1}{X^\mathrm{T}X}{X^\mathrm{T}(A(Q_{\mathcal{A}}\nabla (n-q)K_1)-A(G^{\star}))X}\\
    &\geq  \frac{2}{X^\mathrm{T}X}\left(\sum_{uv\in E(Q_{\mathcal{A}})}x_{u}x_{v}-\sum_{uv\in E(G^{\star}[L])}x_{u}x_{v}\right)\\
    &>0.
\end{align*}
By Lemma \ref{lem3.3}, $Q_{\mathcal{A}}\nabla (n-q)K_1$ is $F$-free.
However, this contradicts that $G^{\star}$ is extremal to ${\rm spex}(n,F)$.
Hence, $e(G^{\star}[L])={\rm ex}(q,\mathcal{A})$.
From the proof in Lemma \ref{lem3.3} we know that  $G^{\star}[L]$ is $\mathcal{A}$-free.
Therefore, $G^{\star}[L]\in {\rm EX}(q,\mathcal{A})$.
\end{proof}

\begin{lem}\label{lem3.5}
Let $n$ be sufficiently large and $\delta\geq 2$. Then $S_{n,q}^1$ is $F$-free and $e(G^{\star}[R])\geq 1$.
\end{lem}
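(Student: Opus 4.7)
The plan is to establish the two assertions separately: the first (that $S_{n,q}^1$ is $F$-free) is a purely combinatorial fact about tree embeddings and forms the core of the lemma, while the second ($e(G^{\star}[R])\ge 1$) will follow from a short spectral comparison that invokes the first.

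For the first assertion, I would argue by contradiction. Write $X$ for the $K_q$-clique and $Y$ for the independent set of $S_{n,q}^1$ into which a single edge $y_1y_2$ has been embedded, and suppose there is an embedding $f:V(F)\to V(S_{n,q}^1)$. Set $A^{\ast}=A\cap f^{-1}(Y)$, $B^{\ast}=B\cap f^{-1}(Y)$, and $t=|A^{\ast}|$. Since $|A|=q+1>q=|X|$, we have $t\ge 1$, and $f$ maps $(A\setminus A^{\ast})\cup (B\setminus B^{\ast})$ injectively into $X$, which yields $(q+1-t)+|B\setminus B^{\ast}|\le q$, i.e., $|B\setminus B^{\ast}|\le t-1$. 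The key structural observation is that any edge of $F$ between $A^{\ast}$ and $B^{\ast}$ maps to an edge inside $Y$, and $Y$ contains only the one edge $y_1y_2$; hence $e(F[A^{\ast},B^{\ast}])\le 1$. Combining this with $\delta\ge 2$ gives
$$e(F[A^{\ast}, B\setminus B^{\ast}]) \;=\; \sum_{a\in A^{\ast}} d_F(a)-e(F[A^{\ast},B^{\ast}]) \;\ge\; 2t-1.$$
On the other hand, $F[A^{\ast}\cup (B\setminus B^{\ast})]$ is a forest on at most $t+(t-1)=2t-1$ vertices and so has at most $2t-2$ edges, a contradiction.

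For the second assertion, assume $e(G^{\star}[R])=0$. By Theorem \ref{theorem1.1}, $G^{\star}$ contains $K_{q,n-q}$ as a spanning subgraph, and with no edges inside $R$ we get $G^{\star}=G^{\star}[L]\nabla (n-q)K_1 \subseteq K_q\nabla (n-q)K_1=S_{n,q}^0$. Hence $\rho(G^{\star})\le \rho(S_{n,q}^0)<\rho(S_{n,q}^1)$, where the strict inequality follows from $S_{n,q}^0\subsetneq S_{n,q}^1$ together with the connectedness of $S_{n,q}^0$. The first assertion shows $S_{n,q}^1$ is $F$-free, so the extremality of $G^{\star}$ forces $\rho(G^{\star})\ge \rho(S_{n,q}^1)$, which is the required contradiction.

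The main obstacle is the first assertion, specifically producing the matched pair of bounds on $e(F[A^{\ast},B\setminus B^{\ast}])$: the lower bound $2t-1$ relies on $\delta\ge 2$ together with the single-edge ceiling inside $Y$, while the upper bound $2t-2$ uses the forest structure of $F$ in combination with the vertex count $|B\setminus B^{\ast}|\le t-1$. That last inequality, obtained by counting images in $X$, is the delicate ingredient that closes the competition between the two bounds; once it is in hand, the lemma falls out quickly.
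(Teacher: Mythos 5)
Your proof is correct and follows essentially the same route as the paper: for the first assertion, the paper defines $A_2=A\cap Y_2$ and $B_1$ (the $Y_1$-neighbors of $A_2$), then uses the forest bound $e(F[A_2\cup B_1])\le |A_2|+|B_1|-1$, the $\delta\ge 2$ lower bound with the single-edge restriction in $Y_2$, and the capacity $|A_1|+|B_1|\le|Y_1|=q$, which is the same double count you run with $A^{\ast}$ and $B\setminus B^{\ast}$; the second assertion is handled identically via $G^{\star}\subsetneq S_{n,q}^1$ and Perron--Frobenius (though the strict inequality is properly attributed to connectedness of the larger graph $S_{n,q}^1$, not $S_{n,q}^0$).
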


\begin{proof}
We first prove that $S_{n,q}^1$ is $F$-free,
where $Y_1$ is the set of dominating vertices of $S_{n,q}^1$  and $Y_2=V(S_{n,q}^1)\setminus Y_1$.
Otherwise, embed $F$ into $S_{n,q}^1$.
Set $A_i=A\cap Y_i$ for each $i\in \{1,2\}$.
Then $A=A_1\cup A_2$.
Since $|A|=q+1=|Y_1|+1>|A_1|$, we have $A_2\neq\varnothing$.
In  graph $F$, let $B_1$ be the set of vertices in $Y_1$ adjacent to at least one vertex in $A_2$.
Then, $B_1\subseteq B$, and thus $A_1\subseteq Y_1\setminus B_1$ as $A_1\subseteq A$.
Obviously, $S_{n,q}^1[Y_2]$ contains exactly one edge, say $e$.
Since $F[A_2\cup B_1]$ is a forest, we have $e(F[A_2\cup B_1])\leq |A_2|+|B_1|-1$.
On the other hand, since $\delta\geq 2$,
we can see that $e(F[A_2\cup B_1])\geq 2|A_2|-1$ if there exists a vertex in $A_2$ incident to $e$,
and $e(F[A_2\cup B_1])\geq 2|A_2|$ if there exists no vertex in $A_2$ incident to $e$.
In both situations,
   $$2|A_2|-1\leq e(F[A_2\cup B_1])\leq |A_2|+|B_1|-1,$$
which yields that $|A_2|\leq |B_1|$.
Combining $A_1\subseteq Y_1\setminus B_1$, we obtain
  $$q+1=|A|=|A_1|+|A_2|\leq |Y_1\setminus B_1|+|B_1|=|Y_1|=q,$$
a contradiction.
Hence, $S_{n,q}^1$ is $F$-free.
It follows that $\rho(G^{\star})\geq \rho(S_{n,q}^1)$.

Now we prove that $e(G^{\star}[R])\geq 1$.
Otherwise, $e(G^{\star}[R])=0$, which implies that $G^{\star}$ is a proper subgraph of $S_{n,q}^1$.
Then, $\rho(G^{\star})<\rho(S_{n,q}^1)$, contradicting $\rho(G^{\star})\geq \rho(S_{n,q}^1)$.
Hence, $e(G^{\star}[R])\geq 1$.
This completes the proof.
\end{proof}

Combining Lemmas \ref{lem3.4} and \ref{lem3.5}, we can directly get Theorem~\ref{theorem1.4}.
Having Lemmas \ref{lemma3.1.}-\ref{lem3.5},
we are ready to complete the proofs of the remaining theorems.

\begin{proof}[\textbf{Proof of Theorem~\ref{theorem1.2}}]
(i) Recall that $G^{\star}$ is an extremal graph to ${\rm spex}_{\mathcal{P}}(n,F)$ and $G^{\star}=G^{\star}[L] \nabla G^{\star}[R]$.
Suppose that ${\rm SPEX}(n,F)=\{S_{n,(\ell-2)/2}^0\}$.
Then, $G^{\star}[L]=K_{(\ell-2)/2}$ and $e(G^{\star}[R])=0$.
Since $e(G^{\star}[R])=0$, we have $\delta=1$  by Lemma \ref{lem3.5}.
Since $G^{\star}[L]=K_{(\ell-2)/2}$, we have $\beta(F)\geq {\ell/2}$  by Lemma \ref{lem3.3}.
Combining \eqref{alg002} gives $\beta(F)=\ell/2$.

Conversely, if $\beta(F)=\ell/2$, then $\beta(F)=|A|=|B|$ by \eqref{alg002}.
We can further find that $\delta=1$ as $F$ is a tree, and ${\rm EX}(q,\mathcal{A})=\{K_{(\ell-2)/2}\}$.
By Lemma \ref{lem3.4}, $G^{\star}[L]\cong K_{(\ell-2)/2}$ and $e(G^{\star}[R])=0$, that is, ${\rm SPEX}(n,F)=\{S_{n,(\ell-2)/2}^0\}$, as desired.

(ii) Suppose ${\rm SPEX}(n,F)=\{S_{n,{(\ell-3)/2}}^{1}\}$, that is, $G^{\star}[L]\cong K_{(\ell-3)/2}$ and $e(G^{\star}[R])=1$.
Since $e(G^{\star}[R])=1$, we have $\delta\geq 2$ by Lemma \ref{lem3.5}.
Since $G^{\star}[L]\cong K_{(\ell-3)/2}$, we have $\beta(F)\geq {(\ell-1)/2}$ by Lemma \ref{lem3.3}.
Combining this with \eqref{alg002} gives that $\beta(F)=q+1={(\ell-1)/2}$, as desired.

Conversely, suppose $\beta(F)={(\ell-1)/2}$ and $\delta\geq 2$.
Combining \eqref{alg002} gives $|A|=q+1=(\ell-1)/2$.
We first claim that $G^{\star}[R]$ is $2K_2$-free.
Otherwise, $G^{\star}$ contains a copy of $K_{q,n-q}^{2}$.
Let $v_1,v_2$ be two endpoints of a longest path $P$ in $F$.
Since $F$ is not a star, the path $P$ is of length at least 3,
which implies that $v_1,v_2$ have no common neighbors.
Since $\delta\geq 2$, we have $v_1,v_2\in B$.
Set $A'=B\setminus \{v_1,v_2\}$ and $B'=A\cup \{v_1,v_2\}$.
Then $A'$ is an independent set of $F$ with $|A'|={(\ell-3)/2}=q$, and $F[B']$ consists of two independent edges and some isolated vertices.
This indicates that $F\subseteq K_{q,\ell-q}^{2}$.
However, $G^{\star}$ contains a copy of $K_{q,n-q}^{2}$,
and so contains a copy of $F$, a contradiction.
Hence, $G^{\star}[R]$ is $2K_2$-free.

We then claim that $G^{\star}[R]$ is $P_3$-free.
Since $\delta\geq 2$, we have $$\ell-1=e(F)=\sum_{v\in A}d_A(v)\geq \delta \frac{\ell-1}{2}\geq \ell-1.$$
This indicates that all vertices in $A$ are of degree 2.
Choose an arbitrary vertex $v_0\in A$.
Set $A''=A\setminus \{v_0\}$ and $B''=B\cup \{v_0\}$.
Then $A''$ is an independent set of $F$ with $|A''|={(\ell-3)/2}$, and $F[B'']$ consists of a path of length 2 with center $v_0$ and some isolated vertices.
This implies that $G^{\star}[R]$ is $P_3$-free.

Combining the above two claims, we can see that $e(G^{\star}[R])\leq 1$, and hence $G^{\star}\subseteq S_{n,{(\ell-3)/2}}^{1}$ as $q=(\ell-3)/2$.
By $\delta\geq 2$ and Lemma \ref{lem3.5}, $S_{n,{(\ell-3)/2}}^{1}$ is $F$-free.
Therefore, $G^{\star}\cong S_{n,{(\ell-3)/2}}^{1}$.
The result follows.
%
\end{proof}

\begin{proof}[\textbf{Proof of Theorem~\ref{cor4.1}}]

For non-negative integers $a,b,c$ with $a\geq b+1$ and $c\geq 1$, let $S(a,b,c)$ be the spider with $a-b-1$ legs of length 1, $b$ legs of length 2 and one leg of length $c$.
Clearly, $$|V(S(a,b,c))|=(a-b-1)+2b+c+1=a+b+c.$$
We can find integers $\alpha$ and $\gamma$ such that $0\leq \gamma \leq 1$ and $\ell-d-1=2\alpha+\gamma$.
Then $S(\alpha+\gamma+2,\alpha+1,d-2)$ is a spider of order $\ell$ and diameter $d$.

(i) Suppose first that $\ell$ is even.
Whether $d$ is even or not,
we always obtain that $\beta(S(\alpha+\gamma+2,\alpha+1,d-2))=\ell/2$.
By Theorem \ref{theorem1.2} (i), ${\rm SPEX}(n,S(\alpha+\gamma+2,\alpha+1,d-2))=\{G_{n,\ell}\}$.
Suppose now that $\ell$ is odd and $d$ is even.
It is not hard to check that $\gamma=0$, $\delta=2$ and $\beta(S(\alpha+\gamma+2,\alpha+1,d-2))=(\ell-1)/2$.
By Theorem \ref{theorem1.2} (ii), ${\rm SPEX}(n,S(\alpha+\gamma+2,\alpha+1,d-2))=\{G_{n,\ell}\}$, as desired.

(ii) Suppose that both $\ell$ and $d$ are odd.
Let $F$  be a graph of order $\ell$ and diameter $d$.
Then two endpoints of a longest path in $F$ belong to different partite sets, which implies that $\delta=1$.
On the one hand, $\beta(S(\alpha+\gamma+2,\alpha+1,d-2))=q+1=(\ell-1)/2$.
By $\delta=1$ and Theorem \ref{theorem1.2}, ${\rm SPEX}(n,S(\alpha+\gamma+2,\alpha+1,d-2))=\{S_{n,(\ell-3)/2}^{0}\}$.
This means that $S_{n,(\ell-3)/2}^{0}$ does not contain a copy of $S(\alpha+\gamma+2,\alpha+1,d-2)$.
On the other hand,
By $\delta=1$ and Lemma \ref{lem3.4}, $e(G^{\star}[R])=0$. Then,
any graph in ${\rm SPEX}(n,F)$ is a subgraph of $S_{n,q}^{0}$,
and consequently, it is also a subgraph of $S_{n,(\ell-3)/2}^{0}$ as $q+1=|A|\leq (\ell-1)/2$.
This means that ${\rm spex}(n,F)\leq \rho(S_{n,(\ell-3)/2}^{0})$,
with equality if and only if $G^{\star}=S_{n,(\ell-3)/2}^{0}$.
Therefore, if $\rho(G)\geq \rho(S_{n,(\ell-3)/2}^{0})$, then $G$ contains all trees of order $\ell$ and diameter $d$ unless $G=S_{n,(\ell-3)/2}^{0}$, as desired.
\end{proof}

\begin{proof}[\textbf{Proof of Theorem~\ref{theorem1.3}}]
We first consider the lower bound.
From \cite{Nikiforov2010} we know $\rho(S_{n,q}^0)=\frac{q-1}{2}+\sqrt{qn-\frac{3q^2+2q+1}{4}}$.
This, together with Lemma \ref{lem3.5}, gives that
   $$\rho(G^{\star})\geq \rho(S_{n,q}^1)>\rho(S_{n,q}^0)=\frac{q-1}{2}+\sqrt{qn-\frac{3q^2+2q+1}{4}}.$$

It remains the upper bound.
We shall prove that $\Delta\leq\delta-1$, where $\Delta$ is the maximum degree of $G^{\star}[R]$.
Suppose to the contrary that there exists a vertex $\widetilde{u}\in R$ with $d_{R}(\widetilde{u})\geq \delta$.
Choose a vertex $u_0\in A$ with $d_F(u_0)=\delta$.
Then we can embed $F$ into $G^{\star}$ by embedding $A\setminus\{u_0\}$ into $L$, and embedding $B\cup\{u_0\}$ into $R$ such that $\widetilde{u}=u_0$.
This contradicts that $G^{\star}$ is $F$-free.
The claim holds.
Applying $d=q-1$, $n_0=q$ and $d'=\Delta$ with Lemma \ref{lemma3.1.}, we have $\rho^{\star}\leq \rho(J^{\star})$.
By direct computation, we have
$$\rho(J^{\star})=\frac{q+\Delta-1}{2}
    +\frac12\sqrt{(q+\Delta-1)^2-4((q-1)\Delta-q (n-q))},$$
and  $$\rho(J)=\frac{q+\delta-2}{2}
    +\frac12\sqrt{(q+\delta-2)^2-4((q-1)(\delta-1)-q (n-q))}.$$
Since $n$ is sufficiently large and $\Delta\leq\delta-1$,
we obtain that $$\rho^{\star}\leq\rho(J^{\star})\leq \rho(J)=\sqrt{qn}+\frac{q+\delta-2}{2}+O(\frac{1}{\sqrt{n}}).$$
This completes the proof.
\end{proof}

\begin{proof}[\textbf{Proof of Theorem~\ref{theorem1.5}}]
Let $v^{\star}$ be the center of the spider $F$,
and let $C$ denote the set of vertices at odd distance from $v^{\star}$ in $F$.
Then $C\in \{A,B\}$.
Combining Lemma \ref{lemma3.2.}, we can observe that
\begin{eqnarray}\label{alg017}
\delta=\left\{
                                       \begin{array}{ll}
                                         1  & \hbox{if $r\geq 1$ and $s\geq 1$,} \\
                                         2  & \hbox{otherwise,}
                                       \end{array}
                                     \right.
\end{eqnarray}
and
\begin{eqnarray}\label{alg018}
\beta(F)=\nu(F)=|A|=\left\{
                                       \begin{array}{ll}
                                         (\ell-r+1)/2  & \hbox{if $r\geq 1$,} \\
                                        (\ell-1)/2  & \hbox{if $r=0$.}
                                       \end{array}
                                     \right.
\end{eqnarray}
We first give the following claim.
\begin{claim}\label{claim1.2}
$G^{\star}[R]$ is $P_3$-free.
\end{claim}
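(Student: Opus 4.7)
The plan is to prove Claim~\ref{claim1.2} by contradiction. Assume that $G^{\star}[R]$ contains a path $u_1 u_2 u_3$ on three vertices, and try to embed $F$ into $G^{\star}$ using this $P_3$ together with the spanning $K_{q,n-q}$ supplied by Theorem~\ref{theorem1.1}. The point is that the two extra edges inside $R$ let us afford to push exactly one vertex of $A$ into $R$ rather than into $L$, compensating for the fact that $|A|=q+1$ is one larger than $|L|=q$.

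Concretely, I would choose any vertex $v\in A\setminus\{v^{\star}\}$; this is possible because $|A|=q+1\ge 2$. Since $F$ is a spider and only $v^{\star}$ can have degree at least $3$, we have $d_F(v)\le 2$. The embedding then places $v$ at $u_2$, injects the $d_F(v)$ neighbors of $v$ in $F$ (all of which lie in $B$) into $\{u_1,u_3\}$, maps the remaining $q$ vertices of $A$ bijectively onto $L$, and maps the remaining vertices of $B$ injectively into $R\setminus\{u_1,u_2,u_3\}$. The last step is possible because $|R|=n-q$ is much larger than $\ell$.

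Verifying that this is a valid embedding is then an adjacency check: every edge of $F$ incident to $v$ is realized by one of the two path edges $u_1u_2$, $u_2u_3$, while every remaining edge of $F$ runs between a vertex of $A\setminus\{v\}$ sitting inside $L$ and a vertex of $B$ sitting inside $R$, and is therefore realized by the spanning $K_{q,n-q}$. This produces a copy of $F$ in $G^{\star}$, contradicting its $F$-freeness. I do not anticipate any real obstacle; the whole argument hinges on locating a single vertex of $A$ outside the center with degree at most $2$ in $F$, which is automatic from $q\ge 1$ and the definition of a spider. The argument is also uniform across the five cases of Theorem~\ref{theorem1.5}, which is reassuring since in each of those cases the prescribed extremal graph has $G^{\star}[R]$ a matching (and in particular $P_3$-free).
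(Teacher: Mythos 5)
Your proof is correct and takes essentially the same route as the paper: the paper locates a degree-$\le 2$ vertex of $A$ by picking $v_1$ or $v_2$ on a leg of length at least $2$, whereas you observe directly that any $v\in A\setminus\{v^\star\}$ works since $q\ge 1$ forces $|A|\ge 2$ and every non-center vertex of a spider has degree at most $2$; either way the vertex is moved to the $R$-side and its at most two incident edges are absorbed by the $P_3$, with the rest of $F$ realized through the spanning $K_{q,n-q}$.
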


\begin{proof}
Since $F$ is not a star, we can select a leg of length $k\geq 2$, say $v^{\star}v_1\cdots v_{k}$.
Clearly, $v_{i}\in A$ for some $i\in \{1,2\}$.
Set $A'=A\setminus\{v_{i}\}$ and $B'=B\cup \{v_{i}\}$.
Then, $A'$ is an independent set of $F$ with $|A'|=|A|-1=|L|$,
and $F[B']$ consists of a path of length 2 with center $v_i$ and some isolated vertices.
Thus, $G^{\star}[R]$ is $P_3$-free.
\end{proof}

Now we distinguish two cases to complete the proof.

\noindent{{\bf{Case 1.}}} $s\geq 1$.

Suppose first that $r\geq 1$.
By \eqref{alg017} and \eqref{alg018}, we have $\delta=1$ and $\beta(F)=q+1=(\ell-r+1)/2$.
Combining Theorem \ref{theorem1.4}, we have ${\rm SPEX}(n,F)=\{S_{n,(\ell-r-1)/2}^{0}\}$, as desired.
Suppose then that $r=0$.
By \eqref{alg018}, $|L|=q=(\ell-3)/2$.
Since $s=d_F(v^{\star})\geq 2$, we can select two legs of even length, say $v^{\star}v_1v_2\cdots v_{2k_1-1}v_{2k_1}$ and $v^{\star}w_1w_2\cdots w_{2k_2-1}w_{2k_2}$.
Obviously, $v_{2k_1},w_{2k_2}\in B$.
Set $A'=B\setminus\{v_{2k_1},w_{2k_2}\}$ and $B'=A\cup \{v_{2k_1},w_{2k_2}\}$.
Then, $A'$ is an independent set of $F$ with $|A'|=|B|-2=(\ell-3)/2$,
and $F[B']$ consists of two independent edges and some isolated vertices.
This implies that $G^{\star}[R]$ is $2K_2$-free.
Combining Claim \ref{claim1.2}, we have $e(G^{\star}[R])\leq 1$, and thus $G^{\star}\subseteq S_{n,(\ell-3)/2}^1$.
On the other hand, from \eqref{alg017} we get $\delta= 2$.
By Lemma \ref{lem3.5}, $S_{n,(\ell-3)/2}^1$ is $F$-free.
Thus, $G^{\star}=S_{n,(\ell-3)/2}^1$, as desired.

\noindent{{\bf{Case 2.}}} $s=0$.

Obviously, $r\geq 2$.
Since $F$ is not a star, we have $r_1+r_2\geq 1$.
Now, we divide the proof into the following three subcases.

\noindent{{\bf{Subcase 2.1.}}} $r_1\geq 1$.

Then, there exists a leg of length $2k+1\geq 5$, say $v^{\star}v_1\dots v_{2k+1}$.
Clearly, $v^{\star},v_2,\dots,v_{2k}\in A$.
Set $A'=(A\setminus\{v_2,v_4\})\cup \{v_3\}$ and $B'=V(F)\setminus A'$.
Then, $A'$ is an independent set of $F$ with $|A'|=|A|-1=(\ell-r-1)/2$,
and $F[B']$ consists of two independent edges $v_1v_2, v_4v_5$ and some isolated vertices.
This indicates that $G^{\star}[R]$ is $2K_2$-free.
Combining Claim \ref{claim1.2}, we have $e(G^{\star}[R])\leq 1$, and thus $G^{\star}\subseteq S_{n,(\ell-r-1)/2}^1$.
On the other hand, since $\delta=2$ by \eqref{alg017},
by Lemma \ref{lem3.5}, $S_{n,(\ell-r-1)/2}^{1}$ is $F$-free.
Thus, $G^{\star}=S_{n,(\ell-r-1)/2}^1$, as desired.

\noindent{{\bf{Subcase 2.2.}}}  $r_1=0$, $r_2\geq 1$ and $r_3\in \{0,1\}$.

By \eqref{alg018}, $|A|=(\ell-r+1)/2$ and $|B|=(\ell+r-1)/2$.
Moreover, there are exactly $r$ leaves in $F$, say $v_1,v_2,\dots,v_{r}$,
which contains no common neighbors as $r_3\in \{0,1\}$.
Obviously, $v_1,v_2,\dots,v_{r}\in B$.
Set $A'=B\setminus \{v_1,v_2,\dots,v_{r}\}$ and $B'=A\cup \{v_1,v_2,\dots,v_{r}\}$.
Then $A'$ is an independent set of $F$, and $F[B']$ consists of $r$ independent edges and some isolated vertices.
Since $|L|=|A'|$, we can observe that $G^{\star}[R]$ is  $rK_2$-free,
and $S_{n,(\ell-r-1)/2}^{r}$ contains a copy of $F$.
Combining Claim \ref{claim1.2}, we can see that $G^{\star}[R]$ consists of at most $r-1$ independent edges and some isolated vertices, and thus  $G^{\star}\subseteq S_{n,(\ell-r-1)/2}^{r-1}$.

Note that $S_{n,(\ell-r-1)/2}^{r}$ contains a copy of $F$.
Then $r'\leq r$,
where $r'$ is the minimum integer such that $S_{n,(\ell-r-1)/2}^{r'}$ contains a copy of $F$.
By \eqref{alg017}, $\delta\geq 2$.
Then, from Lemma \ref{lem3.5} we know that $S_{n,(\ell-r-1)/2}^{1}$ is $F$-free, which implies that $r'\geq 2$.
Now we shall prove $r'=r$.
Otherwise, $r'<r$.
Embed $F$ into $S_{n,(\ell-r-1)/2}^{r'}$,
where $Y_1$ is the set of dominating vertices of $S_{n,(\ell-r-1)/2}^{r'}$ and $Y_2=V(S_{n,(\ell-r-1)/2}^{r'})\setminus Y_1$.
Set $V(F)\cap V(Y_1)=A'$ and $V(F)\cap V(Y_2)=B'$.
By the definition of $r'$,  $F[B']$ contains exactly $r'$ independent edges, say $e_1,e_2,\dots,e_{r'}$, and some isolated vertices.
Contracting $e_i$ as a vertex for each $i\in \{1,\dots,r'\}$ in $F$ and $S_{n,(\ell-r-1)/2}^{r'}$, we obtain a corresponding spider $F'$ and a corresponding graph $S_{n-r',(\ell-r-1)/2}^{0}$.
Then, $F'\subseteq S_{n-r',(\ell-r-1)/2}^{0}$ as $F\subseteq S_{n,(\ell-r-1)/2}^{r'}$.
Now we shall prove that $S_{n-r',(\ell-r-1)/2}^{0}$ is $F'$-free, which gives a contradiction.
By Claim \ref{claim1.2}, any leg of $F$ has at most one of these independent edges.
If $r_3=0$, then $F'$ has exactly $r-r'\geq 1$ legs of length 3 and $r'$ legs of length 2.
If $r_3=1$, then either $F'$ has exactly $r-1-r'$ legs of length 3, $r'$ legs of length 2 and one leg of length 1, or $F'$ has exactly $r-r'\geq 1$ legs of length 3 and $r'-1$ legs of length 2.
Let $A'$ and $B'$ be two partite sets of $F'$ with $|A'|\leq |B'|$.
In all situations, we can observe that $|V(F')|=\ell-r'$ and $F'$ has exactly $r-r'\geq 1$ legs of odd length.
By a similar discussion of \eqref{alg018}, we have
 $$\beta(F')=\nu(F')=|A'|=((\ell-r')-(r-r')+1)/2=(\ell-r+1)/2.$$
By the definition of $\mathcal{A}$, ${\rm EX}(q,\mathcal{A})\cong \{K_{(\ell-r-1)/2}\}$.
Then, by Lemma \ref{lem3.3}, we obtain that $S_{n-r',(\ell-r-1)/2}^{0}$ is $F'$-free, a contradiction.
Hence, $r'=r$. By the definition of $r'$, $S_{n,(\ell-r-1)/2}^{r-1}$ is $F$-free.
Recall that $G^{\star}\subseteq S_{n,(\ell-r-1)/2}^{r-1}$.
Then by the definition of $G^{\star}$, we have $G^{\star}=S_{n,(\ell-r-1)/2}^{r-1}$, as desired.

\noindent{{\bf{Subcase 2.3.}}}  $r_1=0$, $r_2\geq 1$ and $r_3\geq 2$.

Clearly, $\ell=1+3r_2+r_3=1+r+2r_2$, and consequently $r_2=(\ell-r-1)/2$.
By Claim \ref{claim1.2}, $G^{\star}[R]$ is $P_3$-free.
By \eqref{alg018}, $q=(\ell-r-1)/2$.
Combining these with Theorem \ref{theorem1.1},
$G^{\star}\subseteq S_{n,(\ell-r-1)/2}^{\lfloor(2n-\ell+r+1)/{4}\rfloor}$.
It suffices to show that $S_{n,(\ell-r-1)/2}^{\lfloor(2n-\ell+r+1)/{4}\rfloor}$ is $F$-free.
Suppose to the contrary that $S_{n,(\ell-r-1)/2}^{\lfloor(2n-\ell+r+1)/{4}\rfloor}$ contains a copy of $F$.
Then embed $F$ into $S_{n,(\ell-r-1)/2}^{\lfloor(2n-\ell+r+1)/{4}\rfloor}$,
where $Y_1$ is the set of dominating vertices of $S_{n,(\ell-r-1)/2}^{\lfloor(2n-\ell+r+1)/{4}\rfloor}$ and $Y_2=V(S_{n,(\ell-r-1)/2}^{\lfloor(2n-\ell+r+1)/{4}\rfloor})\setminus Y_1$.
Set $V(F)\cap Y_1=A'$ and $V(F)\cap Y_2=B'$.
Clearly, $F-\{v^{\star}\}$ consists of $r_2$ paths of length 2, say $P^1,P^2,\dots,P^{r_2}$,
 and $r_3$ isolated vertices, say $w_1,w_2,\dots,w_{r_3}$.
Since $S_{n,(\ell-r-1)/2}^{\lfloor(2n-\ell+r+1)/{4}\rfloor}[Y_2]$ is $P_3$-free,
 at least one vertex of $P^{i}$ belongs to $A'$ for each $i\in \{1,2,\dots,r_2\}$,
and at least one vertex of $\{v^{\star},w_1,w_2\}$ belongs to $A'$.
It follows that $|A'|\geq r_2+1=(\ell-r+1)/2$, which contradicts that $|A'|\leq |Y_1|= (\ell-r-1)/2$.
Hence, $S_{n,(\ell-r-1)/2}^{\lfloor(2n-\ell+r+1)/{4}\rfloor}$ is $F$-free.

This completes the proof of Theorem \ref{theorem1.5}.
\end{proof}

\end{document}